\numberwithin{equation}{section} 
\newtheorem{lemma}{Lemma}[section]
\newtheorem{theorem}[lemma]{Theorem}
\newtheorem{corollary}[lemma]{Corollary}
\newtheorem{proposition}[lemma]{Proposition}
\theoremstyle{definition}
\newtheorem{definition}[lemma]{Definition}
\newtheorem{example}[lemma]{Example}
    \newcommand{\cH}{\mathcal{H}}
    \newcommand{\cD}{\mathcal{D}}
	\newcommand{\norm}[1]{\left\|#1\right\|}
	\newcommand{\g}{\mathfrak{g}}
	\newcommand{\h}{\mathfrak{h}}
    \newcommand{\tri}{\mathfrak{t}}
    \newcommand{\brt}{\bar t}
\newtheorem{lettertheorem}{Theorem}
\def\cF{{\mathcal{F}}}
\def\cL{{\mathcal{L}}}
\def\m{{\mathbf{m}}}
\def\t{{\mathbf{t}}}
\def\bk{{\mathbf{k}}}
\def\bp{{\mathbf{p}}}
\def\EXP{{\mathbb{E}}}
\def\Tor{{\mathbb{T}}}
\def\R{{\mathbb{R}}}
\def\Z{{\mathbb{Z}}}
\def\N{{\mathbb{N}}}
\def\k{{\textbf{k}}}
\def\brC{{\bar C}}
\def\hC{{\hat C}}
\def\DS{\displaystyle}
\newcommand{\ignore}[1]{}
\newcommand{\annotation}[1]{\leavevmode\raise.8ex\hbox to0pt{\hss\ensuremath{\curlyvee}\hss}\marginpar{\tiny\baselineskip6pt #1}}
\title[Random walks on nilmanifolds]{On Rapid mixing for random walks on  nilmanifolds.}
\author{DMITRY DOLGOPYAT}
\address{(Dmitry Dolgopyat) University of Maryland Department of Mathematics 4176 Campus Drive
College Park, MD 20742-4015 USA}
\email{dmitry@umd.edu}
\author{Spencer Durham}
\address{(Spencer Durham) University of Maryland Department of Mathematics 4176 Campus Drive
College Park, MD 20742-4015 USA}
\email{sdurham4@umd.edu}
\author{Minsung Kim}
\address{(Minsung Kim) Department of Mathematics,  Kungliga Tekniska Högskolan, Lindstedtsvägen 25, SE-100 44 Stockholm, Sweden}
\email{minsungdream@gmail.com}
\date{\today}
\subjclass[2020]{22E25, 37A25, 37H05, 60F05, 60G50}
\keywords{Random walks, Rapid mixing, Nilmanifolds, Central Limit Theorem}
\begin{document}

\begin{abstract} 
We prove rapid mixing for almost all random walks generated by $m$ translations on an arbitrary nilmanifold under mild assumptions on the size of $m$. For several classical nilmanifolds, we show that $m=2$ suffices.
This provides a partial answer to the question raised in \cite{D02} 
about the prevalence of rapid mixing for random walks on homogeneous spaces.
\end{abstract}

\maketitle


\section{Introduction}

Let $G$ be a simply connected nilpotent Lie group and $\Gamma$ be a co-compact lattice so that  $M = G/\Gamma$ is a nilmanifold equipped with the Haar measure $\mu$.
{We shall also use $\mu$ to denote the Haar measure on $G$. It will not cause a confusion since the meaning will be always clear from the context.}
A translation on $M$ by $g\in G$ is the map  $x\Gamma\mapsto gx\Gamma$. In this paper, we study random walks by a finite set of translations on $M$. 
These random walks will be defined by an $m\in \N, \ x\in M$, a set 
$F:=\{g_1,\ldots, g_m\}\subset G$, and an associated probability vector $\vec p =  (p_1, p_2,\dots, p_m)$, i.e. $p_i>0$ and $\DS \sum p_i=1$. 
 The random walk is 
 a Markov chain where $x_n = g_k x_{n-1}$ with the probability $p_k$. Let $\langle F\rangle$ denote the semigroup generated by the set $F$.
Associated to the set $F$, we have an operator $\cL:C^r(M) \rightarrow C^r(M)$  defined by 
 $$\cL(A)(x) := \EXP(A(x_1)|x_0=x)  =\sum_{j=1}^m p_j A(g_j \cdot x),\ A \in C^r(M).$$  
 It follows that for any $N >1$,
{\[\DS (\cL^N A)(x) = \EXP_x(A(x_N))=\sum_{|W|=N} p_N(W)A(Wx),\] where $|W|$ is the length of the word $W$ and 
$\DS {p_N(W)=p_N(g_{w_N}\ldots g_{w_1})}=\prod_{i=1}^Np_{w_i}$ is the probability of having walked by $W$ at time $N$.} 

Given observables $A,B \in C^r(M)$, the correlation of $A$ and $B$ after time $N$ is given by
$$\overline\rho_{A,B}(N) =  \int (\cL^NA)(x)B(x)d\mu(x) - \int A(x) d\mu(x) \int B(x) d\mu(x).$$

\begin{definition}
A random walk is  \emph{rapid mixing} 
if given $q\in \N$, there are constants $C, r>0$  such that for any $A,B\in C^r(M)$ and $N\in\N$,  
$$|\overline\rho_{A,B}(N)| \leq C\norm{A}_{C^r(M)} \norm{B}_{C^r(M)} N^{-q}.$$
\end{definition}
{Below, in order to simplify the formulas, we only consider correlations of zero mean functions. This is sufficient since
every function can be decomposed as a sum of a zero mean function and a constant (the mean).
}

Mixing plays a key role in the study of statistical properties of dynamical systems.
Many classical systems are exponentially mixing (see e.g. \cite[Appendix A]{DFL}).
The random walks we consider do not exhibit exponential mixing,   
{so the best estimate we can hope for 
is that the mixing of $C^r$ observables occurs at a $O(N^{-q(r)})$ rate with $q(r)\to\infty$ as $r\to\infty$. 
This is exactly the definition of rapid mixing that we have provided. Our main result shows that most sufficiently rich walks are indeed rapid mixing.}
Rapid mixing is also sufficient to establish several
key statistical properties, {including the Central Limit Theorem, see Appendix \ref{AppCLT}}.

Nilmanifolds support a rich and well-studied variety of homogeneous dynamical systems. These systems, in addition to being of purely dynamical interest, are also of interest to the broader mathematical community as their dynamical properties can have important consequences in other fields, particularly 
{number theory \cite{Fu61, F17} and combinatorics \cite{HK18}}. 
In part due to the complexity of their algebraic structures,
dynamics on higher-dimensional nilmanifolds are not yet completely understood. In this paper, we prove the rapid mixing of almost all random walks on nilmanifolds generated by a {sufficiently large (finite) number of group elements.} In particular, we will define a technical algebraic condition called 
{\em $m$-greatness} and show that almost any random walk on an $m$-great nilmanifold supported on $m$ translations is rapid mixing. {We then show that every step-$s$ nilmanifold is $s$-great.} This leads to the following theorem which applies to all nilmanifolds.
\begin{lettertheorem}\label{allNilmanifoldsTheorem}
     For any step-$s$ nilmanifold $M=G/\Gamma$, there is a constant $N_G\leq s$ such that for $m\geq N_G$, almost every random walk generated by $m$ translations is rapid mixing.
\end{lettertheorem}

 Here and below we say that a property holds almost everywhere if it fails on a subset of zero $\mu^m$--measure.

We note that even in the case of the torus there are exceptional walks for which mixing can be slower than any power of time. In fact, \cite{D02} identifies a Diophantine condition which is {\em necessary} and sufficient for
rapid mixing. Therefore, {\em almost every} can not be replaced by every in Theorem \ref{allNilmanifoldsTheorem} (as well as Theorem \ref{ThRM} below). Additionally, we note that the rate of mixing we obtain cannot be improved to an exponential rate. This follows from the fact that modes on the torus of the type $e^{2\pi i \lambda x}$ mix at a rate that is exponential but the exponent tends to 1 along a subsequence of $\lambda$s going to infinity.

 The primary weakness of Theorem \ref{allNilmanifoldsTheorem} is that it sometimes requires more than two generators to guarantee that rapid mixing random walks are of full measure. Within certain special classes of nilmanifolds, we can overcome this deficiency. 
  Among these are the \emph{quasi-abelian} nilmanifolds, which have previously been studied in the context of parabolic flow dynamics \cite{Rav18,FF23}. We also give special attention to triangular nilmanifolds and step-3 nilmanifolds, and ultimately show the following theorem.
\begin{lettertheorem}
\label{ThRM}
 If $G$ is quasi-abelian, triangular, or step-3 or lower, then $N_G = 2$.
\end{lettertheorem}

It is a classical theorem that for single translations on a nilmanifold, ergodicity, unique ergodicity, and minimality are equivalent and in fact only depend on the irrationality of the abelian projection of the translation. Our result is similar in that we only need an arithmetic property on 
associated toral actions.
On the other hand, unlike other cases, the arithmetic property we require takes into account how higher-order commutators of the generators will act on $M$. In particular, the argument requires certain polynomials arising from the Lie algebra structure to be linearly independent. However, the relevant polynomials only depend on the projections of the generators in the abelianization of $G$.
It follows that Theorem \ref{allNilmanifoldsTheorem} could be strengthened to say that any $m$-tuples in $G$  that project into the aforementioned full measure set inside the maximal abelian factor generate rapid mixing walks. 

A rapid mixing random walk on $M$ also satisfies the Central Limit Theorem
(see Appendix \ref{AppCLT}). 
Several results on central and local limit theorems for a position of a walker on a nilpotent Lie groups were 
obtained recently
in \cite{hough2019local, BE23A, BE23B,DH21}.  In contrast, our result concerns 
additive functions of the walks on nilmanifolds.
\medskip

We now describe the structure of the paper. 
Section \ref{sec;prelim} contains preliminaries on nilpotent Lie groups, Lie algebras, and nilmanifolds. 
{In Section \ref{sec;proof}, we define $m$-greatness as well as a relevant Diophantine property. We show that the Diophantine property implies rapid mixing (Proposition \ref{pencilImpliesMixing}). Then, modulo a claim that connects $m$-greatness to the Diophantine property (Proposition \ref{lem;great}),  we show that a full measure set of $m$-tuples on an $m$-great nilmanifold generate rapidly mixing walks. In Section \ref{sec;RW}, we prove
Proposition \ref{lem;great} by constructing words that act Diophantinely on appropriate tori. In Section \ref{sec;greatGroups}, we establish that the groups listed in Theorem \ref{ThRM} are $2-$great implying Theorem \ref{ThRM}. We also show Proposition \ref{PrSStepSGreat} which states that any step $s$-nilmanifold is $s$-great, proving Theorem \ref{allNilmanifoldsTheorem}.} Finally, we provide an example of a Lie algebra that is not $2$-great, showing that our technique is not sufficient to show the expected optimal result that $N_G\!\!=\!\! 2$ for all nilpotent groups.

\subsection*{Acknowledgement} 
The authors {thank} Adam Kanigowski for fruitful discussions and several suggestions for improving the quality of the initial draft. They are grateful to Bassam Fayad and Giovanni Forni for their careful readings and several comments on the draft.
M.K. thanks Xuesen Na for his assistance in setting up a computer program for testing calculations.


D.D. was partially supported by the NSF grant DMS-2246983.

S.D. was partially supported by the NSF grant DMS-2101464.

M.K. was partially supported by UniCredit Bank R\&D group through the ‘Dynamics and Information Theory Institute’ at the Scuola Normale Superiore, foundations for the Royal Swedish Academy of Sciences, and Carl Trygger’s Foundation for Scientific Research.

\section{Preliminaries on nilmanifolds}\label{sec;prelim} 
We provide a background of nilpotent Lie groups, Lie algebras and nilmanifolds.
{ The material of this subsection is taken from \cite{CG90,Rag72}, and we also refer to \cite{AFRU21} for additional information about general nilmanifolds.}

\subsection{Nilpotent Lie groups and Lie algebras}
A real Lie algebra $\g $ is called \emph{nilpotent} if the lower (descending) central series of $\g$ terminates i.e. the sequence defined by 
\begin{equation}\label{def;descending}
\g = \g^{(0)} \supset \g^{(1)} = [\g,\g] \supset \cdots \supset \g^{(j)} = [\g^{(j-1)},\g] \supset \cdots,
\end{equation}
where $[\h ,\g] = \{[X,Y]:X \in \h, Y\in\g \}$, eventually has $\g^{(s)}=0$ for some $s$.
{ The \emph{step} of $\g$  is the minimal number $s$} that satisfies $\g^{(s)} = 0$.  

The lower central series of a Lie group $G$ is defined by $G^{(0)}=G$ and $G^{(j+1)}=[G^{(j)},G]$ where $[\cdot,\cdot] $ is the commutator bracket. A connected and simply connected Lie group $G$ is called nilpotent if $G^{(s)}$ is equal to the trivial group for some $s$. A Lie group $G$ is nilpotent if and only if its associated Lie algebra $\g$ is nilpotent. In fact, the lower central series of $G$ and $\g$ are connected, as $G^{(j)} =  \exp(\g^{(j)})$.
\begin{proposition}\cite{CG90}
If $G$ is a connected and simply connected nilpotent Lie group, then the exponential map $\exp : \g \rightarrow G$ is a diffeomorphism. 
\end{proposition}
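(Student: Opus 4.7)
The plan is to use the Baker--Campbell--Hausdorff (BCH) formula to exhibit an explicit smooth inverse for $\exp$. The crucial point is that in a nilpotent Lie algebra $\g$ of step $s$, every Lie word of length exceeding $s$ vanishes, so the BCH series
\[
X*Y \;=\; X+Y+\tfrac12[X,Y]+\tfrac{1}{12}[X,[X,Y]]-\tfrac{1}{12}[Y,[X,Y]]+\cdots
\]
truncates after finitely many terms and defines a polynomial map $*\colon\g\times\g\to\g$. No convergence issues arise, so $*$ is a globally well-defined, smooth (in fact polynomial) operation on the vector space $\g$.

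The first step is to verify that $(\g,*)$ is a connected, simply connected Lie group whose Lie algebra is $\g$. Associativity and the identity/inverse axioms can be obtained by first checking them in the free nilpotent Lie algebra of step $s$ on three generators (where they follow from the formal BCH identity inside a matrix completion) and then transporting the relations along the Lie algebra homomorphism from the free algebra to $\g$. The underlying manifold is $\g\cong\R^{\dim\g}$, which is connected and simply connected, and differentiating $*$ at the origin recovers $[\cdot,\cdot]$, so the Lie algebra of $(\g,*)$ is canonically identified with $\g$.

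The second step is to invoke the uniqueness theorem for connected, simply connected Lie groups with a prescribed Lie algebra: there is a unique Lie group isomorphism $\phi\colon(\g,*)\to G$ with $d\phi_0=\mathrm{id}_\g$. In $(\g,*)$ the one-parameter subgroup through $X$ is $t\mapsto tX$ (all self-brackets vanish in the BCH polynomial, so $tX*sX=(t+s)X$), hence $\exp_{(\g,*)}$ is the identity map of $\g$. Naturality of $\exp$ with respect to Lie group homomorphisms then gives $\exp_G=\phi\circ\exp_{(\g,*)}\circ d\phi_0^{-1}=\phi$, so $\exp_G$ is a diffeomorphism, being the composition of a Lie group isomorphism with the identity.

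The main obstacle is the first step: proving that the truncated BCH polynomial is genuinely a group law. The clean route is to reduce to the free nilpotent Lie algebra of step $s$, where associativity can be established either by embedding into the formal power series algebra (and using that $\exp$ and $\log$ are formal inverses there) or by working inside the universal enveloping algebra modulo the $(s{+}1)$-st power of the augmentation ideal. As a fallback, I would run an induction on $s$: since $\g^{(s-1)}$ is central, $Z:=\exp(\g^{(s-1)})$ is a closed central vector subgroup of $G$, the quotient $G/Z$ is simply connected of step $s-1$, and the exponential for $G$ can be analyzed by combining the inductive diffeomorphism on $G/Z$ with the abelian case on $Z$, using that the resulting central extension admits a smooth global section because $G/Z$ is contractible.
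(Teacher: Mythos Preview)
The paper does not supply a proof of this proposition at all: it is stated with a citation to \cite{CG90} and used as background. So there is nothing in the paper to compare your argument against.

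That said, your outline is correct and is essentially one of the standard proofs of this fact. The key ingredients---that the truncated BCH series endows $\g$ with a polynomial group law, that the resulting group is simply connected with Lie algebra $\g$, and that uniqueness of simply connected groups with a given Lie algebra forces $\exp_G$ to coincide with the canonical isomorphism $\phi$---are all sound. Your observation that $\exp_{(\g,*)}=\mathrm{id}_\g$ because $tX*sX=(t+s)X$ is the crux, and it is right. The fallback induction on step via the central subgroup $\exp(\g^{(s-1)})$ is also a legitimate (and in some treatments the primary) route. If you were writing this up carefully, the one place deserving a bit more detail is verifying that the Lie bracket obtained by differentiating $*$ agrees with the original bracket on $\g$; this follows from the second-order term in BCH but should be said explicitly.
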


\noindent
The product operation on $G$  satisfies the Baker-Campbell-Haussdorff (BCH) formula
\begin{equation}
\label{BCH}
\exp(X)\exp(Y) = \exp\left(X+Y + \frac{1}{2}[X,Y]+\sum_\alpha c_\alpha X_\alpha\right),
\end{equation}
where $\alpha$ is a finite (for nilpotent groups) set of labels, $c_\alpha$ are real constants, and $X_\alpha$ are iterated Lie brackets of $X$ and $Y$ (see \cite{Dyn47}).

It will be convenient to denote  $n_j =  \dim(\g^{(j)}) - \dim(\g^{(j+1)})$ so that $n_j$ is the dimension of the quotient algebra $\g^{(j)}/ \g^{(j+1)}$ (or corresponding quotient group $G^{(j)} / G^{(j+1)}$). If $X,Y\in\g$ satisfy $X-Y\in\g^{(j)}$ we write that $X=Y \mod \g^{(j)}$.

\begin{definition}[Malcev basis]
A Malcev basis for $\g$ through the descending central series $\g^{(j)}$ 
 is a basis $X_1^{(0)},\cdots X_{n_0}^{(0)}, $ $\cdots, X_1^{(s-1 )},\cdots, X_{n_s}^{(s-1)}$ of $\g$ satisfying the following:

(1) if we set $E^j = \{ X_1^{(j)},\cdots, X_{n_j}^{(j)} \}$, the elements of the set  $E^j \cup E^{j+1}\cup \cdots \cup E^{s}$ form a basis of $\g^{(j)}$;
 
(2) if we drop the first $l$ elements, the remaining elements span an ideal (of codimension $l$) of $\g$.
 \end{definition}
If $\Gamma=\DS  \left\{\exp\left(\sum_{j,k} m_{j,k} X_j^{(k)}\right)\right\}_{m_{j,k}\in \Z}$, then
we say that the basis is \emph{strongly based at} $\Gamma$. The Lie algebra $\g$ of any nilmanifold $G/\Gamma$ can be equipped with a Malcev basis strongly based at $\Gamma$. Moving forward, we will always use this basis when writing Lie algebra elements in coordinates. For convenience, we will denote by $X_i = X_i^{(0)}$. { We observe that if $A\subseteq\R^{n_0}$, then 
\[\mu(\{\alpha_{01}X_1+\ldots \alpha_{0n_0}X_{n_0}+\dots:
(\alpha_{01},\ldots,\alpha_{0n_0})\in A\})={ \mathrm{Leb}(A)},\] 
i.e. for a set defined by the coefficients of the $X_i$, the measure of the set is exactly the Lebesgue measure of the permitted coefficients.}

\subsection{Nilmanifolds and Fibration}
{A compact \emph{nilmanifold} is a quotient $M := G/\Gamma$} where $G$ is a nilpotent group and $\Gamma$ is a (co-compact) lattice of $G$.
The lattice $\Gamma$ exists if and only if $G$ admits rational structural constants.  We will consider the left action of $G$ by translations on $M$. More precisely, for $g,h \in G$, set 
$
g(h \Gamma)=(gh)\Gamma.
$
Every nilmanifold is a fiber bundle over a torus. The abelianization  $G^{ab} = G / [G,G]$ is abelian, connected and simply connected, hence isomorphic to $\R^n$. 
Thus, there is a natural projection 
\begin{equation}
P: G/ \Gamma \rightarrow  G^{ab}/\Gamma^{ab} \simeq \Tor^n. 
\end{equation}
For all $k \in \N_0$, the group $G^{(k+1)}$ is a closed normal subgroup of $G$, we have natural epimorphisms $\pi^{(k)}: G \rightarrow G/G^{(k+1)}$. Then, the group 
 $G^{(k+1)} \cap \Gamma$ is a lattice of $G^{(k+1)}$. Moreover, 
 $\Gamma_k:=\pi^{(k)}(\Gamma)$ is a lattice in $N_k:=G/G^{(k+1)}$ and 
 $$M^{(k)}:= G/G^{(k+1)}\Gamma=N_k/\Gamma_k$$ is a nilmanifold.
It follows that 
$\DS \pi^{(k)}: M = G/\Gamma \rightarrow M^{(k)}$
is a fibration
whose fibers are the orbits of $G^{(k+1)}$ on $G/\Gamma$, homeomorphic to the nilmanifolds $G^{(k+1)}/(G^{(k+1)} \cap \Gamma)$.  
We can also define $M_p:=G^{(p)}/G^{(p+1)}\Gamma_p\simeq \Tor^{n_p}$.

\begin{subsection}{Harmonic Analysis on Nilmanifolds}
In this section, we discuss the structure of $L^2(M)$ for a nilmanifold $M$. This discussion is based on Kirillov Theory. A general overview of the theory can be found in \cite{CG90}, and \cite{FF23} includes a similar discussion to ours focused on the theory of filiform nilmanifolds. Let $\mathfrak{a}^*$ denote the linear dual to a Lie algebra $\mathfrak{a}\subset\g$, and let $\mathfrak{a}^0$ denote the annihilator of $\mathfrak{a}$ in $\g^{*}$. For $\lambda\in\g^*$, let $\chi_\lambda:G^{(p)}\rightarrow\mathbb{C}$ be the function \[\chi_\lambda (g)=e^{2\pi i\lambda(\log(g))}.\] 
By Kirillov Theory, for every infinite dimensional irreducible unitary representation $H$ of $G$, there is an associated $0\leq p<s$ and $\lambda\in \g^{(p)*}\cap(\g^{(p+1)})^{0}$, such that for $\varphi\in {H}$, \[g\varphi=\chi_\lambda(g)\varphi\text{ for all }g\in G^{(p)}.\]  Observe that the functionals that appear in this description, $\g^{(p)*}\cap(\g^{(p+1)})^{0}$, are naturally associated with the functionals in the space $(\g^{(p)}/\g^{(p+1)})^*\simeq (\R^{n_p})^*$.

We need to describe which representations appear in $L^2(M)$. Since we have a basis for $\g$ (the Malcev basis), the coordinate functionals $X_1^{(1)*},\ldots,X_{n_{s-1}}^{(s-1)*}$ are well-defined and form a basis for $\g^*$. Define $\pi_p:\g\rightarrow\R^{n_p}$ by $\pi_p(V)=(X_1^{(p)*}(V),\ldots,X_{n_p}^{(p)*}(V))$. Also, let $E_p^*=\{X_1^{(p)*},\ldots,X_{n_p}^{(p)*}\}$, and define $\Lambda_p$ to be the set of integer linear combinations of elements of $E_p^*$. One may note that there is a natural correspondence between the $\Lambda_p$ and the characters on the torus $M_p$. The significance of $\Lambda_p$ is that the $\lambda\in\Lambda_p$ are the only functionals in $\g^{(p)*}\cap(\g^{(p+1)})^{0}$ with the property that $\chi_\lambda$ descends to a well defined function on $M$. Thus, for $\lambda\in\Lambda_p$ we can define \[H_\lambda=\{\varphi\in L^2(M):\varphi(gx)=e^{2\pi i\lambda(\log (g))} \varphi(x) \text{ for all } g\in G^{(p)}\},\] and $L^2(M)$ will decompose into a direct sum of such $H_\lambda$.
In particular, if we let $\displaystyle\Lambda=\Lambda_0\cup\bigcup_{p=1}^{s-1}(\Lambda_p-\{0\})$, 
we have that \[L^2(M)=\bigoplus_{\lambda\in\Lambda} H_\lambda.\]
Moreover, when $\varphi\in C^r(G/\Gamma)$, if we write
$\displaystyle \varphi=\sum_{\lambda\in\Lambda} \varphi_\lambda$ with $\displaystyle  \varphi_\lambda\in H_\lambda,$
then the weight functions $\varphi_\lambda$  satisfy 
$\displaystyle \|\varphi_\lambda\|_{C^0}\leq \frac{\|\varphi\|_{C^r}}{\|\lambda\|^r}.$
\end{subsection}
\subsection{Main examples}\label{sec;fil}
In this article, 
we will pay special attention to two classes of Lie groups. 
{The first class was first in \cite{Ver70}, 
(see also \cite{F17} for general introduction). }

\begin{definition}\label{def;quasi-abelian}
A 
Lie algebra $\mathfrak{g}$ 
is called \emph{quasi-abelian} if it is not abelian and has an abelian subalgebra of codimension 1.
\end{definition}

\noindent
Any quasi-ableian Lie algebra $\g$ has a basis
$(X, Y_{i,j})_{(i,j)\in J}$, satisfying the commutation relations
\begin{equation*}
[X, Y_{i,j} ] = Y_{i+1,j},\ (i, j) \in J,
\end{equation*}
and all other commutation relations are trivial (see \cite{FF14}). We also remark that the class of quasi-abelian Lie algebra contains the class of filiform Lie algebras
{ (see \cite{Ver70})}, so our results 
also hold in the filiform case. 
\medskip

The next example is not quasi-abelian, but it has a tractable structure.
\begin{definition}
{ A Lie algebra is {\em triangular} if it isomorphic to $\tri_s$--the Lie algebra of strictly upper triangular $(s+1)\!\times\! (s+1)$ matrices with the standard bracket
for some $s$.}
\end{definition}
It follows from the definition that $\tri_s$ is $\frac{1}{2}s(s+1)$-dimensional. Letting $E_{ij}$ represent the $(s+1)\times (s+1)$ matrix with a 1 in position $(i,j)$ and zeroes elsewhere, we see that $\{E_{ij}:j>i\}$ forms a basis for $\tri_s$. The relationship among these basis elements are given by \[[E_{ij},E_{i'j'}]=\delta_{ji'}E_{ij'}-\delta_{j'i}E_{i'j}.\] 
We refer to \cite{CG90, K22} for additional information on triangular algebras. 

\begin{subsection}{Diophantine Conditions}
Given a vector $v\in\R^d$, we say that $v\in {DC(\gamma,\tau)}$ if for all $n\in \Z^d$ and $m\in\Z$ \[|n\cdot v-m|\geq \frac{\gamma}{|n|^\tau},\]
Let $\displaystyle DC=\bigcup_{\gamma,\tau>0}DC(\gamma,\tau).$

In \cite{KM98}, Kleinbock and Margulis show that within submanifolds of $\R^n$ satisfying a linear independence condition almost every point is in $DC$. 
In particular, they show the following which is listed as Conjecture $H_1$ in their paper.
\begin{theorem}\label{thm;KM}\cite{KM98}
Let $f_1,\ldots,f_n$ along with the constant function 1 form a linearly independent set of analytic functions from $\R^m$ to $\R$. For almost every $x\in \R^m$, 
$(f_1(x),\ldots,f_n(x))\in DC$.

\end{theorem}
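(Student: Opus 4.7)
The plan is to follow the strategy of Kleinbock--Margulis, which recasts the Diophantine condition as a non-escape statement for orbits on the space of unimodular lattices $SL_{n+1}(\R)/SL_{n+1}(\Z)$. Specifically, via the Dani correspondence, a vector $v \in \R^n$ belongs to $DC(\gamma,\tau)$ for some $\gamma,\tau > 0$ if and only if the trajectory $t \mapsto g_t u_v \Z^{n+1}$ fails to leave compact sets faster than linearly in $t$, where $g_t = \mathrm{diag}(e^{nt}, e^{-t}, \dots, e^{-t})$ and $u_v$ is the unipotent matrix with $v$ in its first row above the identity block. Hence it suffices to show that for almost every $x\in\R^m$, the length of the shortest nonzero vector of $g_t u_{\phi(x)}\Z^{n+1}$ decays no faster than $e^{-Ct}$ for some $C = C(x)$, where $\phi(x) := (f_1(x),\dots,f_n(x))$.

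The central ingredient is the Kleinbock--Margulis quantitative non-divergence estimate, which produces a bound of the shape
$$\left|\left\{x \in B : \mathrm{sys}\bigl(g_t u_{\phi(x)}\Z^{n+1}\bigr) < \epsilon\right\}\right| \leq C'\, \epsilon^{\alpha}\, |B|,$$
uniform in $t$ and $\epsilon$, on every sufficiently small ball $B \subset \R^m$. Establishing this requires two inputs. First, one must check that each entry of the matrix-valued map $x \mapsto g_t u_{\phi(x)}$ is $(C,\alpha)$-good in the sense of \cite{KM98}; for real-analytic functions this is a standard consequence of the fact that the measure of sublevel sets $\{|f| < \epsilon\}$ is controlled by a fractional power of the supremum of $|f|$ on a slightly larger ball. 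Second, one must verify the non-degeneracy condition: that the image $\phi(B)$ is not contained in any affine hyperplane of $\R^n$. This is precisely where the hypothesis that $1, f_1, \dots, f_n$ are linearly independent enters, since an affine relation on $B$ would extend to all of $\R^m$ by analytic continuation and contradict the global independence with the constant function.

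Granted this estimate, I would conclude as follows. Apply the non-divergence bound along a dyadic sequence $t_k = k$ with $\epsilon_k = e^{-Ck}$ for a fixed $C > 0$. The resulting measure bounds form a summable series in $k$, so by the Borel--Cantelli lemma, for almost every $x \in B$ one has $\mathrm{sys}(g_{t_k} u_{\phi(x)} \Z^{n+1}) \geq e^{-Ct_k}$ for all sufficiently large $k$. Interpolating between dyadic times (using that $g_t$ distorts lengths by at most a bounded factor over unit time intervals) and reversing the Dani correspondence yields $\phi(x) \in DC(\gamma, \tau)$ for some $\gamma, \tau$ depending on $x$. Exhausting $\R^m$ by countably many such balls $B$ completes the proof.

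The principal obstacle is the proof of the non-divergence estimate itself, with a uniform exponent $\alpha$ independent of $t$ and $\epsilon$. The technical difficulty is that one must control not only short vectors of $\Z^{n+1}$ but short vectors in every primitive sublattice of every rank; Kleinbock--Margulis handle this through an induction on rank combined with a Besicovitch-type covering argument that propagates the $(C,\alpha)$-good estimates across scales. The non-degeneracy hypothesis feeds into this induction at exactly the step where one must rule out that the entire image is trapped in a proper subspace preserved by $g_t$.
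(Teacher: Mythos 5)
This theorem is not proved in the paper at all: it is imported verbatim from Kleinbock--Margulis \cite{KM98} (it is their Conjecture $H_1$, established in that paper), so there is no internal argument to compare against. Your outline is a faithful reconstruction of the actual \cite{KM98} strategy --- Dani correspondence, $(C,\alpha)$-good functions, quantitative non-divergence on the space of lattices, Borel--Cantelli along an arithmetic sequence of times --- and you correctly identify the one place where the hypothesis of the theorem enters, namely that linear independence of $1, f_1,\dots,f_n$ rules out the image lying in an affine hyperplane, which is what makes the map nondegenerate and supplies the lower bound on $\sup_B$ needed in the non-divergence theorem. However, as a proof it is a reduction to the main theorem of the very paper being cited: the quantitative non-divergence estimate, with the exponent uniform in $t$ and with the $\sup$-condition verified for primitive sublattices of \emph{every} rank (not just rank one, where linear independence does the job directly), is the entire technical content of \cite{KM98}, and you explicitly defer it. So the proposal should be regarded as a correct road map rather than a proof.

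One small imprecision worth fixing: since $g_t u_{\phi(x)}(p,q)=(e^{nt}(p+q\cdot \phi(x)), e^{-t}q)$, the systole of $g_t u_{\phi(x)}\Z^{n+1}$ is always at least $e^{-t}$, so the condition ``decays no faster than $e^{-Ct}$ for some $C$'' is vacuous as stated; the correct form of the Dani correspondence requires a rate $e^{-(1-\delta)t}$ with $\delta>0$, equivalently you must take $C<1$ when you choose $\epsilon_k=e^{-Ck}$ (any such $C$ works and yields a finite $\tau=\tau(C)$ after reversing the correspondence). With that adjustment the Borel--Cantelli and interpolation steps go through as you describe.
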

 This result will provide the bridge from the Lie algebra structure to the Diophantine estimates needed to show mixing. {Since the functions we will consider will always be homogeneous polynomials with positive degree, linear independence of $f_1,\ldots,f_n$ is the only condition we will need to check in order to apply this theorem in practice.}
\end{subsection}

\section{ $m$-greatness and mixing}\label{sec;proof} 
In this section, we will define  $m$-great nilmanifolds and show that on these manifolds,
almost any $m$-tuple generates a rapidly mixing walk.

\subsection{Linear independence of polynomials} \label{sec;pencils}
Let $\cH(t, \alpha)=(\cH_1(t, \alpha), \dots, \cH_\ell(t, \alpha))$ be a polynomial map of $\R^a\times \R^b\to \R^\ell.$ Let 
$$\cD(\cH)=\{\brt\in \R^a: \cH_1(\brt, \alpha), \dots ,\cH_\ell(\brt, \alpha)\text{ are linearly dependent}\}.$$
Expanding 
$\DS \cH_j(\brt, \alpha)=\sum_{\m} c_{j, \m}(\brt) \alpha^{\m}$ we see that the above conditions amounts
to vanishing of certain minors of the matrix $c_{j,\m}$ whence $\cD(\cH)$ is an algebraic subvariety of $\R^a.$
We say that $\cH$ is {\em non-degenerate} if $\cD(\cH)\neq \R^a.$

Now we describe a special polynomial mapping associated to a Lie algebra. Fix integers $m$ and $p.$ 
Consider $m$ vectors $\DS V_i=\sum_{j=1}^n \alpha_{ij} X_j$ with $i\in\{1, \ldots, m\}$. {These will correspond to the exponential coordinates of $m$ generators of a random walk $g_1,\ldots,g_m$ mod $\g^{(1)}$. Thus, each variable $\alpha_{ij}$ corresponds to the $X_j$ coordinate of $g_i$. The idea is to see in coordinates what group elements will arise as we take iterated brackets.} To that effect, let 
$$ \cH_{m,p} (\bk, \alpha)=[\ldots \bf{[k_0V ,k_1V],\ldots],k_p V]} +\g^{(p+1)}\in \g^{(p)} / \g^{(p+1)},$$
where $\bk = (\bk_0,\ldots,\bk_p)$, $\bk_q=(k_{q1}, \dots, k_{qm})$ and $\DS \bk_q \textbf{V}=\sum_{i=1}^m k_{qi} V_i.$ 
The image of $\cH_{m,p}$ is exactly the $\g^{(p)}$ coordinates of the elements $p$-fold brackets of vectors in the span of the $V_i$. As we shall see, this will bear relation to the elements $G$ achievable as $p$-fold brackets of elements of the group generated by the $g_i$. {We use $\bk$ instead of $t$ for the first parameter of this map because, while it still makes sense to evaluate this map at any real {value}, we will be particularly interested in $\mathcal{H}_{m,p}(\bk,\alpha)$ that occurs as differences between two different words in the walk. These differences correspond to special values of $\bk$, where $\bk$ counts how many times each generator was used at each step of the construction of the words used to obtain the difference $\mathcal{H}_{m,p}(\bk,\alpha)$, and thus will always be integral.}
Note that we have 
\begin{equation}\label{pencilDefinition}
\cH_{m,p} (\bk, \alpha)=\sum_{i_j\in\{1,\ldots,m\}} k_{0i_0}\ldots k_{pi_p}M_{i_0 i_1,\ldots,i_p}+\g^{(p+1)}
\end{equation} 
where $M_{i_0 i_1,\ldots,i_p}=[\ldots [V_{i_0} ,V_{i_1}],\ldots],V_{i_p}].$ In coordinates, we see that \[\cH_{m,p}(\bk,\alpha)=\sum_{\ell=1}^{n_p}P_\ell(\bk,\alpha)X_i^{(p)}+\g^{(p+1)}\] where each $P_\ell(\bk,\alpha)$ is a polynomial in the coordinates of $\bk$ and $\alpha_{ij}$ (in particular the $\alpha_{0j}$). {We will care about when the map $\cH_{m,p}$ is degenerate, 
since $\cD(\cH_{m,p})$ represents the variety \[\{{\bf k}\in \R^{m(p+1)}: P_1({\bf k}, \alpha), \dots, P_{n_p}({\bf k}, \alpha)\text{ are linearly dependent polynomials in }\alpha\}.\] } From {now on} we will suppress the $+\g^{(p+1)}$ that ought to appear whenever we discuss $\cH_{2,p}$, but we {agree} that $\cH_{2,p}$ is always defined modulo $\g^{(p+1)}$. 
We now provide a brief illustrative example. 
\begin{example}\label{bracketExample}
     Let $\g$ be the step-3 Lie algebra of dimension 5 with the following commutation relations 
     \begin{align*}[X_1,X_2]=Y, \quad
[Y,X_1]=Z_1, \quad 
[Y,X_2]=Z_2.\end{align*}
with all other brackets being $0.$
Let 
$V_1=\alpha_{11} X_1+\alpha_{12} X_2+\dots$, $V_2=\alpha_{21} X_1+\alpha_{22} X_2+\dots$.
Then
$\DS [[V_1,V_2],V_1]=(\alpha_{11}^2\alpha_{22}-\alpha_{11}\alpha_{12}\alpha_{21})Z_1+(\alpha_{11}\alpha_{12}\alpha_{21}-\alpha_{12}^2\alpha_{21})Z_2 \mod \g_3.$ 
Therefore
$\DS M_{121}=\begin{bmatrix}\alpha_{11}^2\alpha_{22}-\alpha_{11}\alpha_{12}\alpha_{21}\\\alpha_{11}\alpha_{12}\alpha_{21}- \alpha_{12}^2\alpha_{21} \end{bmatrix}.$

    Suppose now we wish to compute the $\cH_{ 2,2}$ for this $\g$. To do so, we can proceed in two different ways. We could compute each $M_{i_1i_2i_3}$ and then write the sum as in \eqref{pencilDefinition}. Alternatively, we can explicitly compute $[\bf{[k_0V,k_1V],k_2V]}$ and we shall see how the $M_{i_1i_2i_3}$ appear. We will take this second approach. 
    \begin{align*}
    [[k_{01}V_1+k_{02}V_2&,k_{11}V_1+k_{12}V_2],k_{21}V_1+k_{22}V_2]\\
    =[(k_{01}k_{12}&(\alpha_{11}\alpha_{22}-\alpha_{12}\alpha_{21})-k_{02}k_{11}(\alpha_{11}\alpha_{22}-\alpha_{12}\alpha_{21}))Y,k_{21}V_1+k_{22}V_2]\\
    &=k_{01}k_{12}k_{21}\big((\alpha_{11}^2\alpha_{22}-\alpha_{11}\alpha_{12}\alpha_{21})Z_1+(\alpha_{11}\alpha_{12}\alpha_{22}-\alpha_{12}^2\alpha_{21} )Z_2\big) \\
    &+k_{01}k_{12}k_{22}\big((\alpha_{11}\alpha_{21}\alpha_{22}-\alpha_{12}\alpha_{21}^2)Z_1+(\alpha_{11}\alpha_{22}^2-\alpha_{12}\alpha_{21}\alpha_{22})Z_2 \big) \\
    &-k_{02}k_{11}k_{21}\big((\alpha_{11}^2\alpha_{22}-\alpha_{11}\alpha_{12}\alpha_{21})Z_1+(\alpha_{11}\alpha_{12}\alpha_{22}-\alpha_{12}^2\alpha_{21} )Z_2\big) \\
    &-k_{02}k_{11}k_{22}\big((\alpha_{11}\alpha_{21}\alpha_{22}-\alpha_{12}\alpha_{21}^2)Z_1+(\alpha_{11}\alpha_{22}^2-\alpha_{12}\alpha_{21}\alpha_{22})Z_2 \big).
    \end{align*}

We see that $M_{i_1i_2i_3}$ is exactly the coefficient of $k_{0i_1}k_{1i_2}k_{2i_3}$, and our mapping can 
be written as \[k_{01}k_{12}k_{21}M_{121}+k_{01}k_{12}k_{22}M_{122}+k_{02}k_{11}k_{21}M_{211}+k_{02}k_{11}k_{22}M_{212}.\] In {vector form, $\cH_{2,2}$ takes form}
    \begin{align*}&k_{01}k_{12}k_{21}\begin{bmatrix}\alpha_{11}^2\alpha_{22}-\alpha_{11}\alpha_{12}\alpha_{21}\\\alpha_{11}\alpha_{12}\alpha_{21}- \alpha_{12}^2\alpha_{21} \end{bmatrix}
    +k_{01}k_{12}k_{22}\begin{bmatrix}\alpha_{11}\alpha_{21}\alpha_{22}-\alpha_{12}\alpha_{21}^2\\\alpha_{11}\alpha_{22}^2- \alpha_{12}\alpha_{21}\alpha_{22} \end{bmatrix}\\
    &-
    k_{02}k_{11}k_{21}
    \begin{bmatrix}\alpha_{11}^2\alpha_{22}-\alpha_{11}\alpha_{12}\alpha_{21}\\\alpha_{11}\alpha_{12}\alpha_{21}- \alpha_{12}^2\alpha_{21} \end{bmatrix}-k_{02}k_{11}k_{22}\begin{bmatrix}\alpha_{11}\alpha_{21}\alpha_{22}-\alpha_{12}\alpha_{21}^2\\\alpha_{11}\alpha_{22}^2- \alpha_{12}\alpha_{21}\alpha_{22} \end{bmatrix}.\end{align*} 
    We clearly see that $\cH_{2,2}$ is non-degenerate
    since even just setting $k_{01}=k_{12}=k_{21}=1$ and $k_{02}=k_{11}=k_{22}=0$, we get that at that value of $\bk$, 
    $\cH_{2,2}$ evaluates to $M_{121}$, which has linearly independent rows. {Thus, our selected value of ${\bf k}$ is not in $\cD(\cH_{2,2})$, and thus $\cH_{2,2}$ is non-degenerate.}
\end{example}

\subsection{$m$-great Lie algebras and mixing}
Now, we will define the key technical property which guarantees that the set of rapid mixing $m$-tuples is of full measure. \begin{definition}\label{def;great}
A Lie algebra $\g$ is \emph{$m$-great} if $\cH_{m,i}$ is non-degenerate for every $i\in[0,s-1]$.
\end{definition} 
 When a Lie algebra is not $m$--great, we say it is $m$--bad. We will call a Lie group or a nilmanifold $m$-great if its associated Lie algebra is $m$-great. We will show that $m$-greatness implies that the walk generated almost every $m$-tuple has a Diophantine property in the following sense.
 
 \begin{definition}\label{niceDef}
     Let $M=G/\Gamma$ be a step-$s$ nilmanifold and $F=(g_1,\ldots, g_m)$ be an $m$-tuple.
     \begin{itemize}
         \item A pair of words $W_1,W_2\in\langle F\rangle$ is said to be \textit{nice} on $G^{(p)}$ if they have equal length and their difference $h:=W_2W_1^{-1}$ satisfies $h\in G^{(p)}$, and 
         $\pi_p(\log(h))\in DC$.
         \item An $m$-tuple is said to act \textit{Diophantinely} on $M_p$ if there exists a pair of words that is nice on $G^{(p)}$.
         \item We say that an $m$-tuple acts \textit{Diophantinely at all levels} if it acts Diophantinely on $M_p$ for all $0\leq p<s.$   
     \end{itemize}  
 \end{definition}
 Let $E_m$ denote the set of $m$-tuples which act Diophantinely at all levels on a nilmanifold $M$, {and recall that $\mu^m$ is the measure on the space of $m$-tuples of elements of $G$ given by the $m$-fold product of the Haar measure on $G$ with itself.}

\begin{proposition}\label{lem;great}
If $M\!\!=\!\!G/\Gamma$ is $m$-great, then $E_m$ is of full measure with respect to $\mu^m$.
\end{proposition}
We postpone the proof to Section \ref{sec;RW}. We now show that the random walk generated by any $m$-tuple that act Diophantinely on all levels is rapid mixing. The key estimate appears in the following theorem:

\begin{proposition}
\label{ThTame}
If a random walk is generated by an $m$-tuple acting Diophantinely at all levels, then there exist constants $C, a, b>0$ such that for $A \in C^r(M)$ with zero mean,
\begin{equation}
\label{Tame}
\|\cL^N A\|_{C^0}\leq \frac{K \|A\|_{C^r}}{N^{ar-b}}.
\end{equation}
\end{proposition}

Whenever (\ref{Tame}) holds, it implies rapid mixing of the associated random walk since
\begin{equation}|\overline\rho_{A,B}(N)| \leq \|\cL^NA\|_{C^0}\|B\|_{C^0}\leq C\norm{A}_{C^r} \norm{B}_{C^0} N^{-(ar-b)}.\label{spectralGapToRM}\end{equation}

To prove Proposition \ref{ThTame}, 
we will show that for any $\lambda\in\Lambda$, $\cL$ has spectral gap on $H_\lambda$, and, moreover, that the size of the spectral gap has a polynomial lower bound {in the norm of} $\lambda$. We will also use the fact that the size of a $C^r$ function's projection into $H_\lambda$ is polynomially bounded in the norm of $\lambda$. Together, these estimates will allow us to prove Theorem \ref{ThTame}. We now show the spectral gap.

\begin{lemma}
\label{HChiSp}
Assume that $F$ acts Diophantinely at all levels. There exist constants $C_1,C_2,\ell,\tau>0$ such that if $\lambda\in\Lambda$ is a nontrivial functional and $A\in H_\lambda$, then  
$$\DS \|\cL^N A \|_{C^0}\leq C_1 \left(1-C_2 \norm{\lambda}^{ -2 \tau}\right)^{N/\ell}
\|A\|_{C^0} .$$
\end{lemma}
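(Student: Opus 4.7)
The plan is to produce a single-block contraction $\|\cL^\ell|_{H_\chi}\|_{C^0}\le 1-C_2\|\chi\|^{-2\tau}$ and then iterate. The contraction is obtained by a two-word cancellation: the Diophantine hypothesis at the appropriate level supplies equal-length words $W_1,W_2$ whose ``difference'' $h:=W_2 W_1^{-1}$ is central-enough and Diophantine modulo $\Z$, which will force $|p_{W_1}+p_{W_2}\chi(h)|$ to fall below the mass $p_{W_1}+p_{W_2}$ by a controlled amount depending on $\|\chi\|$.

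To set this up, assign to each nontrivial $\chi=\chi_\lambda$ its \emph{level} $p=p(\chi)\in\{0,\ldots,s-1\}$, defined as the largest index with $\pi_p(\lambda)\ne 0$. With this choice $\chi|_{G^{(p)}}$ is a genuine unitary character of the abelian quotient $G^{(p)}/G^{(p+1)}$; in particular $T_h|_{H_\chi}=\chi(h)\,\mathrm{Id}$ for $h\in G^{(p)}$, where $T_g A(x):=A(gx)$. Since $[G,G^{(p)}]\subset G^{(p+1)}$ on which $\chi$ is trivial, the subspace $H_\chi$ is invariant under every translation $T_g$, hence under $\cL$. For $h\in G^{(p)}$ only the level-$p$ component of $\lambda$ survives: $\lambda(\log h)=\pi_p(\lambda)\cdot\pi_p(\log h)$, and $|\pi_p(\lambda)|$ is bounded above by a constant multiple of $\|\chi\|=\|\lambda^*\|$.

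Next, invoke the Diophantine hypothesis at level $p$ to obtain words $W_1,W_2$ of common length $\ell_p$ with $h=W_2W_1^{-1}\in G^{(p)}$ and $\pi_p(\log h)\in DC(\gamma_p,\tau_p)$, and decompose
\[\cL^{\ell_p}=p_{W_1}T_{W_1}+p_{W_2}T_{W_2}+R,\qquad p_{W_j}\ge p_{\min}^{\ell_p}=:c_0,\qquad \|R\|_{C^0}\le 1-p_{W_1}-p_{W_2}.\]
Since $T_{W_2}=T_{W_1}T_h$, on $H_\chi$ the two-word part collapses to $(p_{W_1}+p_{W_2}\chi(h))T_{W_1}$, and $T_{W_1}$ is a $C^0$-isometry, so
\[\|\cL^{\ell_p}|_{H_\chi}\|_{C^0\to C^0}\le|p_{W_1}+p_{W_2}\chi(h)|+(1-p_{W_1}-p_{W_2}).\]
Writing $\chi(h)=e^{2\pi i\theta}$ and combining the elementary identity $|p+qe^{2\pi i\theta}|^2=(p+q)^2-4pq\sin^2(\pi\theta)$ with the Diophantine lower bound $\|\theta\|_{\R/\Z}\gtrsim\|\chi\|^{-\tau_p}$ yields $\|\cL^{\ell_p}|_{H_\chi}\|_{C^0}\le 1-C_2^{(p)}\|\chi\|^{-2\tau_p}$. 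Taking $\ell,\tau,C_2$ to be the maximum, maximum, and minimum respectively over the finitely many levels $p\in\{0,\ldots,s-1\}$, iterating in blocks of length $\ell$, and bounding the final $<\ell$ applications by $\|\cL\|_{C^0}\le 1$ produces the announced inequality, with $C_1$ absorbing the one-block loss.

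The main delicate point is the structural verification that $T_h|_{H_\chi}$ truly is scalar multiplication by $\chi(h)$ for $h\in G^{(p)}$ while $H_\chi$ remains $T_g$-invariant for every $g\in G$; both facts hinge on the triviality of $\chi$ on $G^{(p+1)}$ together with normality of $G^{(p)}$, which is precisely why the level must be chosen as the \emph{largest} index with $\pi_p(\lambda)\ne 0$. Once this Kirillov-type transformation rule is in hand, the remainder of the argument is bookkeeping around the two-word cancellation above.
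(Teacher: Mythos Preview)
Your proof is correct and follows essentially the same approach as the paper: choose the level $p$ as the largest index with $\lambda_p\neq 0$, use the nice pair $W_1,W_2$ at that level to extract a two-word cancellation in $\cL^{\ell_p}$, bound the resulting $|p_{W_1}+p_{W_2}\chi(h)|$ via the Diophantine condition, take worst-case constants over the finitely many levels, and iterate. The only cosmetic differences are that the paper symmetrizes by pulling out $\min(p_{W_1},p_{W_2})$ and applies the inequality $|1+e^{2\pi i\theta}|\le 2-C\theta^2$ (its Lemma~\ref{rem:new}) rather than your equivalent $|p+qe^{2\pi i\theta}|^2=(p+q)^2-4pq\sin^2(\pi\theta)$, and that you are more explicit than the paper about why $T_h$ acts as the scalar $\chi(h)$ on $H_\chi$ for $h\in G^{(p)}$.
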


\begin{proof}
{Let $A\in H_\lambda$ where $\lambda\in \Lambda_i$. We will now fix words and constants that depend only on $i$ and not on the particular $\lambda$ as follows. Let $W^i_1$ and $W^i_2$ be the nice words on $G^{(i)}$ guaranteed by the assumption that the $g_i$ act Diophantinely at all levels. Let $\ell_i$ be the common length of $W^i_1$ and $W^i_2$, and let $\gamma_i,\tau_i$ be the associated Diophantine constants. 

Recall that $p_{\ell_i}(W)$ denotes the probability that the walk has moved by $W$ at time $\ell_i$. Suppressing the dependence on $\ell_i$, let $\bp=\min(p(W_1), p(W_2))$.
Suppose without loss of generality that 
$\bp=p(W_1)$. 
Then set $h=W_2W_1^{-1}$, and let $W_3,\ldots, W_{m^{\ell_i}}$ denote the remaining words with positive probability at step $\ell_i$ of the walk. With this notation, we have} 
\[\cL^{\ell_i}(A)(x)=\bp \left[A(W_1 x) +A(h W_1 x)\right]+ \left[(p(W_2)-p(W_1)) A(W_2 x)+\sum_{j\geq 3} p(W_j) A(W_j x) \right].\]
{ Since $A\in H_\lambda$, we have that \[\bp \left[A(W_1 x) +A(h W_1 x)\right]=\bp\left[A(W_1x)+\chi_\lambda(h)A(W_1x)\right]=(1+\chi_\lambda(h))A(W_1x).\] We then get that \[|1+\chi_\lambda(h)|=|1+e^{2\pi i\lambda(\log(h))}|.\] By niceness of $W_1^i$ and $W_2^i$, we have that $\log( h)\in DC(\gamma_i,\tau_i)$. Since $\lambda$ corresponds to an integral functional,
Lemma \ref{rem:newer} from Appendix A says that for some constant $\bar C_i$ depending only on $\gamma_i$, we have that
\[|1+\chi_\lambda(h)|\leq 2-\bar C_i\|\lambda\|^{-2\tau_i}.\]
Putting this all together, we find that
$$\bp|A(W_1 x) +A(h W_1 x)|=\bp|1+\chi_\lambda(h)| |A(W_1 x)|  \leq 2 \bp  (1-\brC_i \|\lambda \|^{-2 \tau_i})\|A\|_{C^0}. $$ Turning our attention to the second term in the expression for $\cL^{\ell_i}(A)(x),$ we see that \[(p(W_2)-p(W_1)) A(W_2 x)+\sum_{j\geq 3} p(W_j) A(W_j x) \leq (1-2\bp)\|A\|_{C^0}.\]}
Combining these estimates, we see that on $H_\lambda$,
\[\DS \|\cL^{\ell_i} A \|_{C^0} \leq  2 \bp  (1-\brC_i \|\lambda \|^{-2 \tau_i})\|A\|_{C^0}+(1-2\bp)\|A\|_{C^0}\leq(1-\hC_i \|\lambda\|^{-2 \tau_i})\|A\|_{C^0} ,\] where $\hat C_i=2\bp\bar C_i$.
    {We have now shown the desired result except for {an unwanted dependence on the level $i$ in the constants $\ell_i, \hat C_i$, and $\tau_i$}. Since there are only $s$ different levels, we only used $s$ different word pairs with associated constants. Thus, we can simply fix $\hC=\min\{\hC_i\}$, $\ell=\max\{\ell_i\}$, and $\tau=\max\{\tau_i\}$.} We then see that \[\DS \|\cL^\ell A \|_{C^0}\!\! \leq \!\! (1\!-\!\hC \|\lambda\|^{-2 \tau})\|A\|_{C^0}\] holds independently of the value of $i$. Iterating this estimate $\lceil N/\ell \rceil$ times gives the result.
\end{proof}
{Now we put together the estimates on the behavior of $\cL$ on each $H_\lambda$ 
and estimate how $\cL$ acts on  $C^r.$}

\begin{proof}[Proof of Proposition \ref{ThTame}]
Decompose $A$ as $\displaystyle\sum_{\lambda\in \Lambda} A_\lambda$ where $A_\lambda\in H_\lambda.$ Since $A$ has mean 0, $A_\lambda=0$ when $\lambda=0$. Thus,
\[\DS \|\cL^N A\|_{C^0}\leq 
\sum_{
\|\lambda\|^{2\tau}<\sqrt{N}}\|\cL^N A_\chi\|_{C^0}+
\sum_{
\|\lambda\|^{2 \tau}\geq \sqrt{N}}\|\cL^N A_\lambda\|_{C^0}.\]
For the first term, we will use the spectral gap of $\cL$ from Lemma \ref{HChiSp}. This gives us that when 
$\|\lambda\|^{2\tau}<\sqrt{N}$, \[\|\cL^N A_{\lambda}\|_{C^0}\leq 
C_1\left(1-\frac{C_2}{\sqrt{N}}\right)^{N/\ell}\|A\|_{C^0}.\]
Thus, since there are at most $N^{{\dim(M)/4\tau}}$ functionals in $\Lambda$ satisfying $\|\lambda\|^{2\tau}<\sqrt{N}$,
\[\sum_{
\|\lambda\|^{2\tau}<\sqrt{N}}\|\cL^N A_\lambda\|_{C^0}\leq C_1{ N^{\dim(M)/4\tau}}
\left(1-\frac{C_2}{\sqrt{N}}\right)^{N/\ell}\|A\|_{C^0}.\]
Setting $C_3=C_2/\ell$ and $a=\dim(M)/4\tau$, this term is $O(N^{a}e^{-C_3\sqrt{N}})\ll N^{{(r-\dim(M))}/{4\tau}}$, so up to changing the constant $K$, we have dealt with the first term.
Finally, we observe that since $A\in C^r$, the projection $A_\lambda$ satisfies \[\|A_\lambda\|_{C^0}\leq \|A\|_{C^r}\|\lambda\|^{-r}.\] Therefore, since $\Lambda$ corresponds to a union of lattices in $\R^{dim(M)}$ whose dimensions sum to $\dim(M)$, we can use the classical bounds on tails of higher dimensional $p$-series to see that for some constant $C_4>0$, \[\sum_{
\|\lambda\|^{2\tau}\geq \sqrt{N}}\|\cL^N A_\lambda\|_{C^0}\leq\sum_{\|\lambda\|>N^{1/4\tau}} \frac{ \|A\|_{C^r}}{\|\lambda\|^r}\leq   C_4N^{-(r-\dim(M))/4\tau}\|A\|_{C^r}.\]
Thus, for the appropriate choice of $K$, $a$, and $b$, the estimate { \eqref{Tame}}
holds.
\end{proof}
\begin{theorem}\label{pencilImpliesMixing}
Let $G/\Gamma$ be an $m$-great nilmanifold. The set of $m$-tuples in $G^m$ which are rapid mixing has full measure with respect to $\mu^m$.
\end{theorem}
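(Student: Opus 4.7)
The plan is to combine the two main ingredients that have already been assembled in this section. By hypothesis $G/\Gamma$ is $m$-great, so Lemma \ref{lem;great} guarantees that the set $E_m$ of $m$-tuples acting Diophantinely at all levels has full $\mu^m$-measure. It therefore suffices to show that every tuple in $E_m$ generates a rapid mixing walk; the full-measure conclusion is then immediate by containment.

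To carry this out, I would fix an arbitrary $(g_1,\dots,g_m)\in E_m$ and apply Theorem \ref{ThTame} to obtain constants $K,a,b>0$ (depending on the tuple) such that
\[
\|\cL^N A\|_{C^0}\leq \frac{K\|A\|_{C^r}}{N^{ar-b}}
\]
for every zero-mean $A\in C^r(M)$. As remarked in the introduction, reducing to zero-mean observables costs nothing since correlations are insensitive to adding constants. Pairing this bound against $B$ exactly as in \eqref{spectralGapToRM} yields
\[
|\overline\rho_{A,B}(N)|\leq K\,\|A\|_{C^r}\|B\|_{C^0}\,N^{-(ar-b)}.
\]
Given any target exponent $q\in\N$ from the definition of rapid mixing, I choose $r$ large enough that $ar-b\geq q$; this produces the required inequality with the constant $C=K$ and regularity $r$ depending only on $q$ (and on the tuple, through $a,b$). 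Thus every element of $E_m$ is rapid mixing, and since $E_m$ has full $\mu^m$-measure the theorem follows.

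The substantive difficulty is not in this assembly step but in its two inputs. The spectral-gap-plus-Sobolev-tail argument producing Theorem \ref{ThTame} has already been carried out above, so the only genuine obstacle remaining is Lemma \ref{lem;great}, whose proof is deferred to Section \ref{sec;RW}. There one must, for each level $p$, construct pairs of equal-length words in the generators whose ratio lies in $G^{(p)}$ and whose projection $\pi_p\circ\log$ satisfies a Diophantine condition on $\Tor^{n_p}$; this is where the non-degeneracy of the polynomial maps $\cH_{m,p}$ enters, feeding into Theorem \ref{thm;KM} of Kleinbock--Margulis to secure almost-sure Diophantineness. Once that ingredient is in hand, the present theorem is merely the routine combination sketched above.
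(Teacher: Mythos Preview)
Your proposal is correct and follows essentially the same approach as the paper: invoke Lemma~\ref{lem;great} to obtain that $E_m$ has full $\mu^m$-measure, then apply Theorem~\ref{ThTame} together with \eqref{spectralGapToRM} to conclude rapid mixing for every tuple in $E_m$. The paper's proof is simply a terser version of what you wrote.
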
 
\begin{proof}
Whenever an $m$-tuple acts Diophantinely on every level, Proposition \ref{ThTame} applies. By \eqref{spectralGapToRM}, this is implies rapid mixing for the random walk they generate. By Proposition~\ref{lem;great} the set of such tuples has full measure with respect to $\mu^m$.
\end{proof}

\section{Nice Words}\label{sec;RW}
In this section, we prove Proposition \ref{lem;great}. {Our approach is to  construct words in the semigroup $\langle F\rangle$ that differ by a nonzero element of $G^{(p)}$ for each $p$. In particular, we will find words that differ by a term that can be expressed as a $p$-fold bracket of elements of $\langle F\rangle$. Then, we will observe that a formula for this difference may be given as a polynomial in the exponential coordinates of the $g_i$. As long as the polynomials arising in this manner are linearly independent Theorem \ref{thm;KM} implies that, for a full measure set of walks, the words we construct act Diophantinely on the appropriate level, and thus are nice. This need for linear independence was the direct motivation for Definition \ref{def;great}.} {Once we have linear independence, Theorem \ref{thm;KM} guarantees the existence of a full measure of the set of $m$-tuples that act Diophanitnely and thus rapidly mix.} 

To begin, 
given $p\geq 1$, let $K_p\subset (\Z^m)^{ (p+1)}$ be the set of all indices $\bf{k_0,\ldots,k_p}$ such that 
$[\ldots \bf{[k_0V,k_1V],\ldots],k_p V]}$ can be written of the form $\log(W_1W_2^{-1}) \text{ mod } \g^{(p+1)}$ for two words $W_1$ and $W_2$ of equal length in $\langle F\rangle$. 
{This is the set of $p$-nested commutators that arise as the logarithm of the difference between two elements of our walk at some time. Thus $K_p$ can heuristically be seen as the set of elements of $\g_p$ available to us in trying to get a self-cancellation between an element of $H_\lambda$ pushed forward by two words in our random walk when $\lambda\in \Lambda_p-\{0\}$. 
In constructing elements of $K_p$, the element $k_{ij}$ will count how many times $g_j$ appears in the $i^{th}$ step of the construction of the words differing by $\cH_p(k,\alpha)$. For instance, in the setting of two generators $g_1$ and $g_2$, since \[\log(g_1g_2(g_2g_1)^{-1})=\log([g_1,g_2])=[{\bf k_0V,k_1V}]\] for ${\bf k}_0=(1,0)$ and ${\bf k}_1=(0,1)$, we have that $((1,0),(0,1))\in K_2$.} {In terms of this set, our goal is to find an intersection between $K_p$ and the complement of $\cD(\cH_{m,p})$ i.e. a difference between two words in our walk with linearly independent polynomials. The following lemmas provide a method for finding elements of $K_p$ that will be rich enough {so} that as long as $\cD(\cH_{m,p})\neq \R^{n_p}$, $K_p$ will not be contained in $\cD(\cH_{m,p})$ as we desire.}

We {now describe a method to} build
inductively  words $a_i$ and $b_i$ in $\langle F\rangle$ such that $h_i:=a_ib_i^{-1}\in G^{(i)}$ and to do so with as much flexibility as possible. 
In particular, given any choice of $w_{-1},
\ldots w_p\in \langle F\rangle $ with $|w_{-1}|=|w_0|$, {we can build the sequences 
$a_i$ and $b_i$ in $\langle w_{-1}, w_{0}, \dots, w_p\rangle $}
To begin with, we set $a_0=w_{-1}, b_0=w_0,$
and \begin{align*}a_i=a_{i-1}w_{i}b_{i-1} \quad \text{and} \quad b_{i}=b_{i-1}w_ia_{i-1}.\end{align*} 
Notice that $b_i$ will always be a concatenation of the $w_{-1},\ldots,w_{i}$. 
Since $|w_{-1}|=|w_0|$, this construction will always give us words $a_i$ and $b_i$ satisfying $|a_i|=|b_i|$.
Observe that \[h_1=a_0w_1b_0a_0^{-1}w_1^{-1}b_0^{-1}=[a_0b_0^{-1},b_0w_1],\]
and
\[h_i=a_{i-1}w_{i}b_{i-1}(b_{i-1}w_{i}a_{i-1})^{-1}=a_{i-1}(b_{i-1}^{-1}b_{i-1})w_{i}b_{i-1}a_{i-1}^{-1}w_{i}^{-1}b_{i-1}^{-1}=[a_{i-1}b_{i-1}^{-1},b_{i-1}w_i]\]
Since $a_{i-1}b_{i-1}^{-1}=h_{i-1}$, we can rewrite this as
$h_i=[h_{i-1},b_{i-1}w_i].$
Thus, $h_1$ can be expressed as a single bracket, and $h_i$ can be expressed as a bracket between $h_{i-1}$ and another group element, so inductively $h_i$ can always be expressed as an $i$-fold bracket. In particular, we get the formula \begin{equation}\label{eqn;hpform}  
h_p=[\ldots[[a_0b_0^{-1},b_0w_1],b_1w_2],\ldots, b_{p-1}w_p].\end{equation}

{We now show that this procedure produces a large set of group elements.} 

\begin{lemma}
Any multi-index
$\bk = (\bk_0,\ldots,\bk_p)\in(\Z^{m})^{(p+1)}$, $\bk_i=(k_{i1}, \dots, k_{im})$ satisfying 
$ k_{(i+1)j}>3k_{ij}$, 
is contained in $K_p$.
\end{lemma}

If we can find ${\bf k_0},{\bf k_1},\ldots,{\bf k_p} $ such that 
{$h_p$ in \eqref{eqn;hpform} satisfies}
\[\log h_p=[\ldots \bf{[k_0V,k_1V],\ldots],k_p V]},\] then $({\bf k_0},{\bf k_1},\ldots,{\bf k_p} )\in K_p$.
{Taking} the logarithm of both sides of (\ref{eqn;hpform}) we get \[\log(h_p)=\log( [\ldots[[a_0b_0^{-1},b_0w_1],b_1w_2],\ldots, b_{p-1}w_p]).\] 
The BCH formula allows us to move the logarithm within the brackets modulo some error that belongs to $\g^{(p+1)}$, so
\begin{equation}\label{eqn;logh}\log(h_p)= [\ldots[[
\log(a_0b_0^{-1}),\log(b_0w_1)],\log(b_1w_2)],\ldots, \log(b_{p-1}w_p)]\mod \g^{(p+1)}.\end{equation}
Define a function ${\bf c}:\langle F,F^{-1}\rangle\rightarrow \Z^m$ such that ${\bf c}_i(W)$ is the number of times $g_i$ appears in the word $W$ with inverses counted as negatives. Define ${\bf k_0}={\bf c}(a_0b_0^{-1})$ and ${\bf k_i}={\bf c}(b_{i-1}w_i)$. Applying BCH again, we have that 
$\displaystyle\log (b_{i-1}w_i)={\bf k_i V} \mod \g^{(1)}$. Applying this fact to (\ref{eqn;logh}) and using the fact that the $p$-fold bracket of a $\g^{(1)}$ error is in $\g^{p+1}$, we get
\[\log(h)=[\ldots[\sum_{i=1}^m {\bf k_{0i}}V_i, \sum_{i=1}^m {\bf k_{1i}}V_i],\ldots,\sum_{i=1}^m {\bf k_{pi}}V_i]=[\ldots \bf{[k_0V,k_1V],\ldots],k_p V]}\mod \g^{(p+1)}.\]
Now, we have ${(\bf k_0,\ldots,k_p})\in K_p$ which depends on our initial choice of $w_{-1},\ldots, w_p$. We wish to understand which sequences $\bf{k_0,\ldots,k_p}$ are possible to get as a result of that choice. Note that $\bf{k_0}$ and $\bf{k_1}$ can be taken to be any vector of nonnegative integers simply by making an appropriate choice of $w_{-1}, w_0,$ and $w_1$. Meanwhile, for $i\geq 2$, we have ${\bf k_i}={\bf c}(b_{i-1})+{\bf c}(w_i)$. Since by our choice of $w_i$ we can make ${\bf c}(w_i)$ any nonnegative integer vector we like, this means we can choose any $\bf k_i$, subject to ${\bf k_i}\geq{\bf c}(b_{i-1})$. Since ${\bf c}(b_{i-1})=2{\bf c}(b_{i-2})+{\bf c}(w_{i-1})\leq 3{\bf k}_{i-1}$, if ${\bf k_i}\geq 3{\bf k_{i-1}}$, then ${(\bf k_0,\ldots,k_p})\in K_p$.
\begin{lemma}\label{zDense}
$K_p$ is Zariski dense.
\end{lemma}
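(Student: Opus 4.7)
The plan is to exhibit a concrete Zariski-dense subset of $(\Z^m)^{p+1}$ already contained in $K_p$. The bridge between the algebraic object (the iterated Lie bracket defining $\cH_{m,p}$) and the combinatorial object (a ratio of equal-length words in $\langle F\rangle$) is the BCH formula: in a nilpotent group the logarithm of a $(p+1)$-fold group commutator equals the corresponding iterated Lie bracket modulo $\g^{(p+1)}$, and such a group commutator can always be rearranged as $W_1 W_2^{-1}$ with $|W_1|=|W_2|$.

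Concretely, for each $\bk=(\bk_0,\ldots,\bk_p)\in\Z_{\geq 0}^{m(p+1)}$ with $\bk_j=(k_{j1},\ldots,k_{jm})$, define the monomial words $A_j:=g_1^{k_{j1}}g_2^{k_{j2}}\cdots g_m^{k_{jm}}\in\langle F\rangle$, so that $\log A_j\equiv\sum_i k_{ji}V_i\pmod{\g^{(1)}}$ by BCH, and form the iterated group commutator
\[ C_{\bk}:=[\,[\ldots[A_0,A_1],A_2],\ldots,A_p\,]\in G^{(p)}. \]
An induction on $p$ using BCH establishes
\[ \log C_{\bk}\equiv \cH_{m,p}(\bk,\alpha)\pmod{\g^{(p+1)}}. \]
At each commutation step the leading BCH term contributes one additional nested bracket of the $V_i$'s, while every correction term either (i) involves an argument that is already one level deeper in the descending central series, or (ii) arises from a triple-or-higher BCH bracket; either way it falls into the next level of the central series, so after $p$ steps all corrections are absorbed into $\g^{(p+1)}$.

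A separate induction on $p$ exhibits $C_{\bk}$ as a ratio of two equal-length words in $\langle F\rangle$. The base case is $[A_0,A_1]=(A_0A_1)(A_1A_0)^{-1}$; the inductive step uses the identity
\[ [C,A]=(W_1W_2^{-1}A)(AW_1W_2^{-1})^{-1}, \]
whose two factors both have length $|W_1|+|W_2|+|A|$. Combining the two conclusions, every $\bk\in\Z_{\geq 0}^{m(p+1)}$ lies in $K_p$.

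Finally, $\Z_{\geq 0}^{m(p+1)}$ is Zariski dense in $\R^{m(p+1)}$ (any polynomial vanishing on the entire nonnegative integer lattice vanishes identically, by induction on the number of variables using the one-variable fact that a polynomial with infinitely many zeros is zero), so $K_p$ is Zariski dense as well. The main technical point is the BCH bookkeeping in the second paragraph: conceptually it is forced by the descending central series, but one does need to check level by level that no correction term survives modulo $\g^{(p+1)}$.
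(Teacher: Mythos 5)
Your overall strategy matches the paper's: realize the iterated bracket $\cH_{m,p}(\bk,\alpha)$ as $\log(W_1W_2^{-1})$ modulo $\g^{(p+1)}$ via an iterated group commutator and BCH, then invoke Zariski density of a lattice cone. The BCH bookkeeping in your second paragraph and the density of $\Z_{\geq 0}^{m(p+1)}$ are both fine. The gap is in the combinatorial step. The words $W_1,W_2$ in the definition of $K_p$ must be \emph{positive} words in the generators $g_1,\dots,g_m$ (no inverses), of a common length $\ell$: this is forced by the downstream application in Lemma \ref{HChiSp}, where $W_1$ and $W_2$ must occur as $\ell$-step trajectories of the random walk, with probabilities $p(W_1),p(W_2)>0$ appearing in the expansion of $\cL^{\ell}$. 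Your inductive step writes $[C,A]=(W_1W_2^{-1}A)(AW_1W_2^{-1})^{-1}$; the factors $W_1W_2^{-1}A$ and $AW_1W_2^{-1}$ contain $W_2^{-1}$ and are therefore not admissible words, nor is ``$|W_1|+|W_2|+|A|$'' their length as a random-walk word. Already at the second level, $[[A_0,A_1],A_2]$ would be expressed through $A_0A_1A_0^{-1}A_1^{-1}A_2$, which cannot be a trajectory.

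The paper circumvents exactly this by interleaving: it sets $\hat g_L^{(p)}=\hat g_L^{(p-1)}w_p\hat g_R^{(p-1)}$ and $\hat g_R^{(p)}=\hat g_R^{(p-1)}w_p\hat g_L^{(p-1)}$, which are genuinely positive words of equal length, and uses the identity $\hat g_L^{(p)}(\hat g_R^{(p)})^{-1}=[\hat g_L^{(p-1)}(\hat g_R^{(p-1)})^{-1},\,\hat g_R^{(p-1)}w_p]$, so the ratio is still an iterated commutator of the required form. The price is that the achievable index tuples only satisfy $\bk_i\geq\bk_{i-1}$ componentwise for $i\geq 2$ (rather than all of $\Z_{\geq0}^{m(p+1)}$, as you claim), but that cone is still Zariski dense. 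To repair your argument, replace your commutator identity with this interleaved construction, or any equivalent one keeping both factors positive and of equal length.
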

\begin{proof}
 By the previous lemma, $\{\k\in (\Z^m)^{p+1}: k_{(i+1)j}>3k_{ij}\}$ is contained in $K_p$. It is clear that this set, 
 is Zariski dense in $\R^{m(p+1)}.$ It follows that $K_p$ is a Zariski dense set.
\end{proof}

\begin{proof}[Proof of Proposition \ref{lem;great}]
 {We claim that there are words $W_1$ and $W_2$ of equal length such that the coordinates of $\pi_p(\log(W_1W_2^{-1}))$ are linearly independent polynomials in $\alpha$. Assume by contradiction that for all words $W_1, W_2$, these polynomials are linearly dependent. That implies that any words that differ by $p$-nested commutators have those commutators' polynomials linearly dependent. Since $K_p$ consists of the set of such realized commutators, this means that
 $K_p\subset \cD(\cH_{m,p}).$ Since $\cD(\cH_{m,p})$ is an algebraic variety and by Lemma \ref{zDense} $K_p$ is Zariski dense, this implies that $\cD(\cH_{m,p})=\R^{n_p}$. Thus, $\cH_{m,p}$ is degenerate, contradicting
the assumption that $\g$ is $m$-great. The claim follows.

Next, by Theorem \ref{thm;KM}, since the coordinate polynomials of $\pi_p(W_1W_2^{-1})$ are linearly independent, 
for almost every value of the $\alpha_{ij}$, $\pi_p(W_1W_2^{-1})\in DC$. That means that for almost every $m$-tuple of $g_i$'s $\pi_p(W_1W_2^{-1})\in DC$.
This implies the proposition. }
\end{proof}

\section{Great Groups}\label{sec;greatGroups}
Now that we have established that $m$-greatness implies rapid mixing of almost all random walks on $m$ elements, we show that this condition is not overly restrictive. To that effect, we establish that many special Lie algebras are 2-great, and that every step-$s$ Lie algebra is $s$-great. This will prove Theorems \ref{allNilmanifoldsTheorem} and \ref{ThRM}. Finally, we present an example of a group that is not 2-great.

{We now briefly discuss our approach for proving that a Lie algebra is $m$-great. Given a Lie algebra $\g$, we will pick some concrete $\bar k_p$ for which we will show that $\cH_{m,p}(\bar k_p,\alpha)$ has linearly independent polynomials i.e. $\bar k_p\notin \cD(\cH_{m,p})$. To show this linear independence, we let $\{P_1,\ldots,P_{n_p}\}$ be the set of $n_p$ coordinate polynomials associated to the $X_i^{(p)}$. If there is a monomial $M_i=\alpha_{i_0j_0}\ldots\alpha_{i_pj_p}$ with nonzero coefficient in $P_i$ but zero coefficient in each $P_j$ when $j\neq i$, we call $M_i$ a \emph{unique monomial} of $P_i$. 
By finding a unique monomial of each of the $P_i$, we can show that the $P_i$ form a linearly independent set. Thus, $\bar k_p\notin \cD(\cH_{2,p})$ and $\cH_{2,p}$ is non-degenerate. From there, we will conclude 2-greatness.}
\subsection{2-great algebras}
Here we prove that all quasi-abelian, triangular and step-3 or lower Lie algebras are 2-great. Throughout this section, we denote $\bar k_p=((1,0),(0,1)^{p})$.  

\begin{proposition}{All quasi-abelian groups are 2-great}\label{qaGood}\end{proposition}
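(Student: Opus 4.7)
The plan is to verify non-degeneracy of $\cH_{2,p}$ for each $p\in[1,s-1]$ by evaluating at the concrete point $\bar k_p=((1,0),(0,1)^{p})$ flagged in the text. At this point $\t_0 V = V_1$ and $\t_q V = V_2$ for $1\le q\le p$, so the expression reduces to the single nested bracket $[\ldots[[V_1,V_2],V_2],\ldots,V_2]$ with $V_2$ appearing $p$ times. The quasi-abelian structure is tailor-made for this computation: the only nontrivial bracket on the basis $(X,\{Y_{i,j}\}_{(i,j)\in J})$ is $[X,Y_{i,j}]=Y_{i+1,j}$, so each application of $[\,\cdot\,,V_2]$ contributes only through the $X$-coefficient of $V_2$.

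Writing $V_k=\alpha_{k,0}X+\sum_{(i,j)\in J}\alpha_{k,(i,j)}Y_{i,j}$ for $k=1,2$, a direct computation would give
\[[V_1,V_2]=\sum_{(i,j)\in J}\bigl(\alpha_{1,0}\alpha_{2,(i,j)}-\alpha_{1,(i,j)}\alpha_{2,0}\bigr)Y_{i+1,j},\]
and since $[Y_{i',j},V_2]=-\alpha_{2,0}Y_{i'+1,j}$ (the $Y$'s commute among themselves), an easy induction on $p$ would yield
\[\cH_{2,p}(\bar k_p,\alpha)=(-\alpha_{2,0})^{p-1}\sum_{(i,j)\in J}\bigl(\alpha_{1,0}\alpha_{2,(i,j)}-\alpha_{1,(i,j)}\alpha_{2,0}\bigr)Y_{i+p,j} \mod \g^{(p+1)}.\]

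Next I would identify which summands survive modulo $\g^{(p+1)}$. In the quasi-abelian setting the lower central series just shifts the first index, so $\g^{(p+1)}$ is spanned by the $Y_{i',j}$ with $i'\ge p+2$, and $\g^{(p)}/\g^{(p+1)}$ has a basis $\{Y_{p+1,j}\}$ indexed by those $j$ whose chain has length at least $p+1$. Thus only the $i=1$ summands contribute, and the coordinate polynomials of $\cH_{2,p}(\bar k_p,\alpha)$ in this basis are
\[P_j(\alpha)=(-\alpha_{2,0})^{p-1}\bigl(\alpha_{1,0}\alpha_{2,(1,j)}-\alpha_{1,(1,j)}\alpha_{2,0}\bigr).\]

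Finally, linear independence of $\{P_j\}_j$ follows because each $P_j$ is the only one involving the two variables $\alpha_{1,(1,j)}$ and $\alpha_{2,(1,j)}$, so no nontrivial linear combination can vanish. The existence of a single $\bar k_p$ at which the coordinate polynomials of $\cH_{2,p}$ are linearly independent certifies its non-degeneracy, and since this argument works uniformly for every $p\in[1,s-1]$, the algebra is $2$-great. The only place where some care is required is the identification of $\g^{(p)}/\g^{(p+1)}$ with the span of $\{Y_{p+1,j}\}$ and the resulting restriction to $i=1$; once that bookkeeping is in place, the disjoint-variable argument for linear independence is automatic.
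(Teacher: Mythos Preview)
Your proof is correct and follows essentially the same approach as the paper: both evaluate $\cH_{2,p}$ at the same point $\bar k_p=((1,0),(0,1)^p)$ and establish linear independence by observing that each coordinate polynomial contains a monomial (equivalently, a variable) that appears in no other coordinate. Your version is somewhat more explicit—you compute the full closed form of $\cH_{2,p}(\bar k_p,\alpha)$ rather than just isolating a single monomial coefficient—and you use the convention that chains start at $i=1$ (so $\g^{(p)}/\g^{(p+1)}$ is spanned by $\{Y_{p+1,j}\}$) whereas the paper starts chains at $i=0$; this is purely notational and does not affect the argument.
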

\begin{proof}
    To show this, we must show that $\cH_{2,p}$ is nondegenerate for all $0\leq p\leq s-1$. We recall that we defined a basis $\{X,Y_{i,j}\}_{(i,j)\in J}$ in Section \ref{sec;fil}. We will let 
    $$V_1 =  \alpha_0 X+\sum_j\alpha_{0,j} Y_{0,j} \mod\g^{(1)}\quad \text{and}\quad
 V_2 =  \beta_0 X+\sum_j\beta_{0,j}Y_{0,j} \mod\g^{(1)}$$
 and {let $P_1,\ldots,P_{n_p}$ be the coordinate polynomials
 in $\alpha,\beta$ of $\cH_{2,p}$.
 We will show that if $P_i$ is the polynomial corresponding to the $Y_{i,p}$ coordinate then $\alpha_{0,i}\beta_0^{p}$ is a unique monomial of $P_i$. This is because the coefficient of $\alpha_{0,i}\beta_0^{p}$ in $[\ldots[V_1,V_2],\ldots V_2]$ is exactly $[\ldots[[Y_{0,i},X],X],\ldots,X]=Y_{i,p}$. This means that each $P_i$ has a unique monomial, so they form a linearly independent set of polynomials. 
 Thus $\bar k\notin \cD(\cH_{2,p})$, so $\cH_{2,p}$ is non-degenerate. Since $p$ is arbitrary, any 
 quasi-abelian Lie algebra is 2-great.}
\end{proof}

\begin{proposition}\label{triangle Good}
    For any $s\in\N$, the triangular algebra $\t_s$ is 2-great.
\end{proposition}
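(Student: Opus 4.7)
The plan is to mirror the strategy of Proposition~\ref{qaGood}: evaluate $\cH_{2,p}$ at the specific point $\bar k_p = ((1,0),(0,1)^p)$ and, for each coordinate of $\tri_s^{(p)}/\tri_s^{(p+1)}$, exhibit a monomial in the coefficients of $V_1,V_2$ that appears only in that coordinate, giving linear independence of the coordinate polynomials for every $1 \le p \le s-1$. With this choice, $\cH_{2,p}(\bar k_p, \alpha) \equiv [\ldots[[V_1, V_2], V_2],\ldots,V_2] \pmod{\tri_s^{(p+1)}}$ with $V_2$ appearing $p$ times. From the commutation relation $[E_{ij}, E_{i'j'}] = \delta_{ji'} E_{ij'} - \delta_{j'i} E_{i'j}$ a straightforward induction on $p$ gives that $\tri_s^{(p)}$ is spanned by those $E_{ij}$ with $j - i \ge p+1$, so that $\{E_{l,l+p+1} : 0 \le l \le s-p-1\}$ is a basis for $\tri_s^{(p)}/\tri_s^{(p+1)}$.

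Writing $V_1 = \sum_{i<j} \alpha_{ij} E_{ij}$ and $V_2 = \sum_{i<j} \beta_{ij} E_{ij}$, for each such $l$ I single out the monomial
\[
m_l := \alpha_{l,l+1}\, \beta_{l+1,l+2}\, \beta_{l+2,l+3} \cdots \beta_{l+p,l+p+1}.
\]
Tracing the chain $E_{l,l+1} \to E_{l,l+2} \to \cdots \to E_{l,l+p+1}$ under successive brackets with $V_2$ immediately shows that $m_l$ appears with coefficient $+1$ in the $E_{l,l+p+1}$ component. The key combinatorial observation is that $[E_{ab}, E_{cd}]$ always produces an element whose row/column indices lie in $\{a,b,c,d\}$, so any iterated bracket term whose variables are contained in those of $m_l$ must yield an $E_{ij}$ with $i,j \in \{l,l+1,\ldots,l+p+1\}$; the constraint $j-i = p+1$ then forces $(i,j) = (l, l+p+1)$. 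Consequently $m_l$ can contribute only to the $E_{l,l+p+1}$ coordinate.

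It remains to check that within that coordinate no alternative bracket ordering produces $m_l$ with a cancelling sign. At each step, starting from the current element $E_{l,l+k}$, taking the ``other'' branch $-\delta_{j'i}E_{i'j}$ of the commutation rule would require consuming a $\beta$ of the form $\beta_{c,l}$ with $c<l$, which is not present in $m_l$; therefore only the first branch can be taken, and the order in which the $\beta_{j,j+1}$'s are consumed is forced to be $(l+1, l+2, \ldots, l+p)$, with total coefficient exactly $+1$. Distinct $l$ then yield distinct monomials in distinct coordinates, so the coordinate polynomials of $\cH_{2,p}$ are linearly independent for every $p \in \{1,\ldots,s-1\}$, proving that $\tri_s$ is 2-great. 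The only genuine obstacle in this plan is the careful index-tracking just described; once it is in place, linear independence is immediate.
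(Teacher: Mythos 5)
Your proof is correct and follows essentially the same route as the paper: evaluate $\cH_{2,p}$ at $\bar k_p=((1,0),(0,1)^p)$ and observe that the monomial $\alpha_{l,l+1}\beta_{l+1,l+2}\cdots\beta_{l+p,l+p+1}$ occurs only in the $E_{l,l+p+1}$ coordinate, giving linear independence. You simply spell out in more detail the index-tracking (why the second branch of the bracket relation and all non-sequential orderings vanish) that the paper's terser computation leaves implicit.
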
 
\begin{proof} We recall that we defined a basis $\{E_{ij}\}$ in Section \ref{sec;fil}. Let
\[\DS V_1=\sum_{i=1}^{n-1} \alpha_{0i} E_{i, i+1}
\quad\mathrm{mod}\quad \g^{(1)}\quad \text{ and
} \quad \DS V_2=\sum_{i=1}^{n-1} \beta_{0i} E_{i, i+1}
\quad\mathrm{mod}\quad \g^{(1)}.
\]
{Let $P_1,\ldots,P_{n_p}$ be the coordinate polynomials  in $\alpha,\beta$ of $\cH_{2,p}$.
We will show that if $P_i$ is the polynomial corresponding to the $E_{i,i+p+1}$ coordinate, then $\alpha_{i}\beta_{i+1},\ldots\beta_{i+p}$ is a unique monomial of $P_i$. Indeed the coefficient of $\alpha_{i}\beta_{i+1},\ldots\beta_{i+p}$ in $[\ldots[V_1,V_2],\ldots V_2]$ is exactly $[\ldots[E_{i,i+1},E_{i+1,i+2}],\ldots,E_{i+p,i+p+1}]=E_{i,i+p+1}$. This means that each $P_i$ has a unique monomial, so they form a linearly independent set of polynomials. 
 Thus, $\bar k\notin \cD(\cH_{2,p})$ and so $\cH_{2,p}$ is non-degenerate. Since $p$ is arbitrary, any 
 triangular Lie algebra is 2-great.}
\end{proof}
We now turn out attention to lower step algebras. We treat first the case of step 1 and step 2 algebras as they may be dealt with quite simply.

\begin{proposition}    
    Any abelian or step 2 Lie algebra is 2-great \label{lowisfree}
\end{proposition}

\begin{proof}
We compute $\cH_{2,0}$ and $\cH_{2,1}$ for arbitrary groups. If $\DS V_1 = \sum_{i}  \alpha_i X_i$, $\DS V_2 = \sum_{i} \beta_i X_i$, we have that $\cH_{2,0}((1,0),\alpha)=V_1$ which clearly has linearly independent polynomials for coordinates. Thus $\cH_{2,0}$ is always non-degenerate. {Now, if $\g$ is non-abelian, let $X^{(1)}\in\g^{(1)}$ be a nonzero vector of the form $X^{(1)}=[X_i,X_j]$ for some $i$ and $j$. In $\cH_{2,1}$, the monomial $\alpha_{i}\beta_j$ only appears as a coefficient of 
$ X^{(1)}$. Since vectors of the form $[X_i,X_j]$ form a basis for $\g_1$, this implies that $\cH_{2,1}$ is nondegenerate.} From this we conclude the {proposition}.
\end{proof}
Next, we show that step-3 Lie algebras are also 2-great. This is the best we can do in terms of step as there exist step-4 Lie algebras that are not 2-great as we shall see. {Our proof technique will be similar to the proofs of $2$-greatness that we have already presented in that we begin by fixing some choice of $k$ which we show to be in the complement of $\cD(\cH_{2,2})$. It will differ in that we will show that the existence of a linear dependence between the polynomials of $\cH_{2,2}$ contradicts the fact that $\g$ is step 3 rather than directly showing linear independence by identifying unique monomials of each polynomial. }  
\begin{proposition}\label{step3Good}
Any nilpotent Lie algebra is {of step 3 or less} is
2-great. 
\end{proposition}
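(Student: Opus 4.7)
The plan is to reduce the proposition to a single non-degeneracy statement for $\cH_{2,2}$ on step-$3$ Lie algebras, and then to prove it via a polarization-plus-Jacobi argument. Any Lie algebra of step at most $2$ is already covered by Lemma \ref{lowisfree}, so the essential case is $s = 3$. For such $\g$ the $2$-greatness condition requires non-degeneracy of both $\cH_{2,1}$ and $\cH_{2,2}$; the former is furnished by Lemma \ref{lowisfree} since a step-$3$ Lie algebra is automatically non-abelian. Thus everything reduces to showing that $\cH_{2,2}$ is non-degenerate for an arbitrary step-$3$ algebra.

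Following the strategy used for the quasi-abelian and triangular cases, I would exhibit a single value of $\bk$ at which the coordinates of $\cH_{2,2}(\bk,\alpha)$ are linearly independent as polynomials in $\alpha$. Taking $\bar k_2 = ((1,0),(0,1),(0,1))$, a direct expansion gives
\[
\cH_{2,2}(\bar k_2,\alpha) = [[V_1, V_2], V_2],
\]
where the reduction modulo $\g^{(3)}$ is trivial since $\g^{(3)}=0$. Writing $[[V_1, V_2], V_2] = \sum_{i} P_i(\alpha)\, X_i^{(2)}$, linear independence of the $P_i$ in $\alpha$ is equivalent to the spanning claim
$\operatorname{span}\{[[V_1, V_2], V_2] : V_1, V_2 \in \g\} = \g^{(2)}$,
so the proposition reduces to this identity.

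To establish the spanning claim I would dualize. Let $L$ be a linear functional on $\g^{(2)}$ vanishing on all $[[V_1, V_2], V_2]$, and set $T(X,Y,Z) := L([[X,Y],Z])$, a trilinear form on $\g$. Antisymmetry of the Lie bracket yields $T(X,Y,Z) = -T(Y,X,Z)$, and the hypothesis $T(X,Y,Y)=0$ gives, by polarization in the last two slots, $T(X,Y,Z) = -T(X,Z,Y)$. So $T$ is antisymmetric in both adjacent pairs of slots, hence totally skew. The Jacobi identity then reads
\[
T(X,Y,Z) + T(Y,Z,X) + T(Z,X,Y) = 0,
\]
and since cyclic permutations preserve any skew trilinear form this collapses to $3\,T(X,Y,Z) = 0$, so $T\equiv 0$. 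Hence $L$ annihilates every element of the form $[[X,Y],Z]$; such elements span $\g^{(2)}$, so $L=0$ and the spanning claim follows.

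The main obstacle I anticipate is precisely this spanning claim; the other steps are mechanical reductions. The polarization-plus-Jacobi trick disposes of it cleanly in characteristic zero, the key point being that a totally skew trilinear form is annihilated by the Jacobi cyclic sum. This argument also makes no use of the particular step-$3$ structure beyond $\g^{(3)}=0$, which is reassuring but also suggests it will not extend to higher steps where one must separately control $[[\cdot,\cdot],\cdot]\bmod \g^{(3)}$ and the genuinely higher brackets.
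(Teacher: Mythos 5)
Your proof is correct and follows essentially the same route as the paper: dualize against a putative dependency $L$, observe that polarization of the diagonal identity together with the antisymmetry of the bracket makes the trilinear form $L([[X,Y],Z])$ totally skew, and then kill it with the Jacobi identity. The only cosmetic differences are that the paper works with $M_{121}=[[V_1,V_2],V_1]$ rather than your $M_{122}=[[V_1,V_2],V_2]$ and finishes by solving a small linear system in place of your cleaner observation that the Jacobi cyclic sum of a totally skew form equals $3T$.
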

\begin{proof}
We set $\DS V_1 = \sum_{i}  \alpha_i X_i$, $\DS V_2 = \sum_{i} \beta_i X_i$ and show that $M_{121}$ consists of linearly independent polynomials.  
By way of contradiction, assume that the polynomials of $M_{121}$ are not independent, i.e.
\[0\equiv \lambda([[V_1,V_2],V_1]) \text{ for some } 0\neq\lambda \in \mathfrak{g}_2^*.\]

Expanding out, we see that this would imply that \[0\equiv \sum_{i_1,i_2,i_3 \in I} \alpha_{i_1}\beta_{i_2}\alpha_{i_3}\lambda([[X_{i_1},X_{i_2}],X_{i_3}])
.\] {From this we deduce that if a given monomial $\alpha_{i_1}\beta_{i_2}\alpha_{i_3}$ has coefficient $W_{i_1i_2i_3}\in\g$ in the expression $[[V_1,V_2],V_1]$, then $W_{i_1i_2i_3}\in Ker(\lambda)$.}
Since terms of the form $\alpha_{i_1}^2\beta_{i_2}$ occur with coefficient $[[X_{i_1},X_{i_2}],X_{i_1}]$, we get
\begin{equation}\label{eqn;Xi123}
   \lambda ([[X_{i_1},X_{i_2}],X_{i_1}]) = 0. 
\end{equation}

{ Next expanding the equation 
 $\lambda([[X_{i_1}+X_{i_3},X_{i_2}],X_{i_1}+X_{i_3}])=0$ as sum of monomials and using \eqref{eqn;Xi123} we get
\begin{equation}
\label{eqn;i1i2i3}
\lambda([[X_{i_1},X_{i_2}],X_{i_3}]+[[X_{i_3},X_{i_2}],X_{i_1}])=0.
\end{equation}
}
Combining  \eqref{eqn;i1i2i3} with
Jacobi identity
$$ [[X_{i_1},X_{i_2}],X_{i_3}] + [[X_{i_2},X_{i_3}],X_{i_1}] + [[X_{i_3},X_{i_1}],X_{i_2}]=0$$
we get
\begin{equation}\label{eqn;123312}
\lambda(2[[X_{i_1},X_{i_2}],X_{i_3}]+[[X_{i_3},X_{i_1}],X_{i_2}])=0.
\end{equation}
Swapping ${i_2}$ and $i_3$ we obtain
\begin{equation}\label{eqn;132213}
\lambda(2[[X_{i_1},X_{i_3}],X_{i_2}]+[[X_{i_2},X_{i_1}],X_{i_3}])=0.
\end{equation}
{ Since the matrix defining  \eqref{eqn;123312}--\eqref{eqn;132213} is non-degenerate, we conclude that}
$$\lambda([[X_{i_1},X_{i_2}],X_{i_3}]) = \lambda([[X_{i_3},X_{i_1}],X_{i_2}]) = 0.$$
Since the equation above holds for arbitrary $i_1,i_2,i_3 \in I$, this implies that $\lambda\!\!=\!\!0$, a contradiction. Thus, $\cH_{2,2}$ is non-degenerate. The fact that $\cH_{2,0}$ and $\cH_{2,1}$ are non-degenerate follows from {Proposition} \ref{lowisfree}, 
so any step-3 (or lower) Lie algebra is 2-great.  
\end{proof}
Combining Propositions \ref{qaGood}, \ref{triangle Good}, and \ref{PrSStepSGreat} with Theorem \ref{pencilImpliesMixing} gives us Theorem \ref{ThRM}.

\subsection{General Lie algebras}
We will now show that every nilpotent Lie algebra is $m$--great for sufficiently large $m$. Our approach will largely mirror that of the proofs of
but taking advantage of the larger value of $m$.
\begin{proposition}
\label{PrSStepSGreat}
    A step--$s$ Lie algebra is $s$--great.
\end{proposition}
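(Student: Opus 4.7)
The plan is to exploit the extra slots available when $m=s$ to pick a very clean index vector, and then reduce non-degeneracy to the classical fact that left-normed brackets span the lower central quotients.

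For each $1\leq p\leq s-1$, the proposal is to evaluate $\cH_{s,p}$ at the specific index $\bar k_p:=(e_1,e_2,\dots,e_{p+1})\in(\Z^s)^{p+1}$, where $e_q$ is the $q$-th standard basis vector of $\Z^s=\Z^m$. This choice is legal because $p+1\leq s=m$. With this choice, $\t_q\mathbf V=V_{q+1}$, so
\[
\cH_{s,p}(\bar k_p,\alpha)=[[V_1,V_2],V_3,\dots,V_{p+1}]\mod\g^{(p+1)}.
\]
Writing $V_q=\sum_j\alpha_{q,j}X_j\mod\g^{(1)}$ and expanding by multilinearity, one gets
\[
\cH_{s,p}(\bar k_p,\alpha)=\sum_{j_0,\dots,j_p}\alpha_{1,j_0}\alpha_{2,j_1}\cdots\alpha_{p+1,j_p}\,[[X_{j_0},X_{j_1}],X_{j_2},\dots,X_{j_p}]\mod\g^{(p+1)}.
\]
The decisive feature is that since each $V_q$ contributes its \emph{own} family of variables $\alpha_{q,\cdot}$, the monomial $\alpha_{1,j_0}\alpha_{2,j_1}\cdots\alpha_{p+1,j_p}$ appears with a unique coefficient, namely the left-normed bracket $[[X_{j_0},X_{j_1}],X_{j_2},\dots,X_{j_p}]$. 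This is exactly the mechanism that was used in Propositions \ref{qaGood} and \ref{triangle Good}, but now applied uniformly without any structural assumption on $\g$.

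Suppose, for contradiction, that the coordinate polynomials of $\cH_{s,p}(\bar k_p,\alpha)$ are linearly dependent over $\R$. Then there is a nonzero $\lambda\in(\g^{(p)}/\g^{(p+1)})^*$ with $\lambda(\cH_{s,p}(\bar k_p,\alpha))\equiv 0$ as a polynomial in $\alpha$. Because all monomials that appear are distinct, each of their coefficients must vanish, giving
\[
\lambda\bigl([[X_{j_0},X_{j_1}],X_{j_2},\dots,X_{j_p}]\bigr)=0\qquad\text{for every tuple }(j_0,\dots,j_p).
\]
The remaining point is the spanning claim: the left-normed brackets of length $p+1$ in the generators $X_1,\dots,X_{n_0}$ span $\g^{(p)}$ modulo $\g^{(p+1)}$. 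This follows by induction on $p$ from $\g^{(p)}=[\g^{(p-1)},\g]$ together with the Jacobi identity, which converts any iterated bracket of length $p+1$ in the $X_i$'s into a linear combination of left-normed ones. Given this spanning property, $\lambda$ vanishes on a spanning set of $\g^{(p)}/\g^{(p+1)}$, so $\lambda=0$, a contradiction. Hence $\bar k_p\notin\cD$ and $\cH_{s,p}$ is non-degenerate for every $1\leq p\leq s-1$, so $\g$ is $s$-great.

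The only non-routine ingredient is the spanning statement for left-normed brackets, which is a standard fact from the theory of free Lie algebras but must be invoked (or justified by a brief induction using Jacobi). Everything else is bookkeeping: the choice of $\bar k_p$ is tailored so that the multilinear expansion produces pairwise distinct monomials whose coefficients are precisely the left-normed brackets, turning the non-degeneracy question into the spanning question. This is where having $m\geq s$ is essential — with only $m=2$ generators one is forced to reuse variables and the monomial-coefficient correspondence breaks, which is exactly why the $2$-great case required additional algebraic structure (Jacobi-based manipulations in Proposition \ref{step3Good}, or the special form of quasi-abelian/triangular algebras).
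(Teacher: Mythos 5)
Your proof is correct and follows essentially the same route as the paper: evaluate $\cH_{s,p}$ at the standard-basis index vectors so that the multilinear expansion produces pairwise distinct monomials whose coefficients are the left-normed brackets, then invoke the fact that such brackets span $\g^{(p)}/\g^{(p+1)}$. The only cosmetic differences are that you treat each level $p$ directly (the paper does $p=s-1$ and passes to quotients for smaller $p$) and that you phrase the conclusion via a dual functional $\lambda$, which is a slightly more careful rendering of the paper's ``unique monomial'' argument.
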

\begin{proof}
We prove that $\cH_{s,s-1}$ is non-degenerate.
Let $\DS V_i=\sum_{j=1}^{n_0}\alpha_{ij}X_j \text{ mod } \g^{(1)},$
and consider $\bf{[\ldots[k_0V,k_1V],\ldots],k_{s-1}V]}$. Let $\bar k$ denote the conveniently selected parameters:
 $\mathbf{k_0}=(1,0,\ldots,0),\ \mathbf{k_1}=(0,1,\ldots, 0),\ldots, \mathbf{k_{s-1}}=(0,0,\ldots,1)$. We compute that the coefficient of $\alpha_{0i_0}\cdots\alpha_{s-1i_{s-1}}$ in $\cH_{s,s-1}(\bar k,\alpha)$ is exactly $[\ldots[[X_{i_0},X_{i_1}],\ldots,X_{i_{s-1}}]$, {so $\alpha_{0i_0}\cdots\alpha_{s-1i_{s-1}}$ will be a unique monomial for the corresponding coordinate polynomial of $\cH_{s,s-1}$}. Since each element of the basis for $\g^{(s-1)}$ can be written of the form $[\ldots[[X_{i_0},X_{i_1}],\ldots,X_{i_{s-1}}]$, we get that each coordinate polynomial of $\cH_{s,s-1}(\bar k,\alpha)$ has a {unique} monomial. 
 Thus, the polynomials at $\bar k$ are linearly independent, so $\cH_{s,s-1}$ is non-degenerate. 
 Since every $\g/\g^{(i+1)}$ is step--$ s$ or lower, the same argument will show that
 $\cH_{s,i}$ is non degenerate for all $i<s$. We conclude that $\g$ is $s$--great.   
\end{proof}Combining Proposition \ref{PrSStepSGreat} with Theorem \ref{pencilImpliesMixing} gives us Theorem \ref{allNilmanifoldsTheorem}. 

We also note that $m$-greatness is preserved by taking products and factors.
\begin{proposition}
The property of $m$-greatness is closed under taking direct products or factors in the following sense:\begin{itemize}
    \item If $\g_1$ and $\g_2$ are $m$-great, then $\g_1\times\g_2$ is $m$-great.
    \item If $\g$ is $m$-great and $\h$ is a factor of $\g$, then $\h$ is $m$-great.
\end{itemize}

\end{proposition}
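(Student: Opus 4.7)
The plan is to leverage how the polynomial map $\cH_{m,i}$ interacts with direct products and with ideals. The two structural facts I would use are: (i) for a direct product $\g_1\times\g_2$ all cross-brackets vanish, so $(\g_1\times\g_2)^{(i)}=\g_1^{(i)}\times\g_2^{(i)}$, and in the basis of $\g_1\times\g_2$ obtained by concatenating Malcev bases of the factors
\[
\cH_{m,i}^{\g_1\times\g_2}(\bk,\alpha^1,\alpha^2)=\cH_{m,i}^{\g_1}(\bk,\alpha^1)\oplus\cH_{m,i}^{\g_2}(\bk,\alpha^2);
\]
and (ii) for a quotient $\h=\g/\mathfrak{i}$, since $\mathfrak{i}$ is an ideal one has $[V_a,V_b]\equiv[V_a'',V_b'']\pmod{\mathfrak{i}}$, where $V_a''$ is the component of $V_a$ in a chosen linear complement to $\mathfrak{i}$; iterating, the reduction of $\cH_{m,i}^{\g}(\bk,\alpha)$ modulo $\mathfrak{i}$ depends only on the complementary components of $\alpha$.

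For the first bullet I would fix $i$ and let $\cD_i^{\g_j}\subsetneq\R^a$ denote the Zariski-closed bad set for $\g_j$. Since each $\g_j$ is $m$-great, both are proper subvarieties of the Zariski-irreducible $\R^a$, hence so is their union, and I may pick $\bk_0$ outside. At $\bk_0$ the coordinate polynomials of $\cH_{m,i}^{\g_j}$ in $\alpha^j$ are linearly independent for each $j$. Because the two lists involve disjoint variable sets and every monomial in $\cH_{m,i}$ has positive total degree in $\alpha$, any alleged linear relation among the concatenated list reduces, upon setting $\alpha^2=0$, to a relation inside the $\g_1$-group, forcing all $\g_1$-coefficients to vanish; symmetrically for $\g_2$. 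Hence $\cH_{m,i}^{\g_1\times\g_2}$ is non-degenerate at $\bk_0$, proving $m$-greatness of the product. (The edge case $i\geq\min(s_1,s_2)$ reduces to a single factor and is handled by its own $m$-greatness.)

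For the second bullet, write $\h=\g/\mathfrak{i}$ and fix a basis of $\g$ of the form $\{Z_1,\dots,Z_k\}\cup\{\tilde Y_1,\dots,\tilde Y_r\}$, where the $Z_l$ span $\mathfrak{i}$ and the $\tilde Y_l$ lift a basis of $\h$; split coordinates as $\alpha=(\gamma,\beta)$, so vectors lifting $\h$-elements correspond to $\gamma=0$. Choose correspondingly a basis $\{K_l\}\cup\{\tilde X_j\}$ of $\g^{(i)}/\g^{(i+1)}$ with $\{K_l\}$ spanning $\ker(\bar\pi_i)$ and $\{\tilde X_j\}$ lifting a basis $\{\bar X_j\}$ of $\h^{(i)}/\h^{(i+1)}$, and expand
\[
\cH_{m,i}^{\g}(\bk,\alpha)=\sum_j A_j(\bk,\alpha)\,\tilde X_j+\sum_l B_l(\bk,\alpha)\,K_l.
\]
Applying $\bar\pi_i$ and invoking fact (ii) gives $\sum_j A_j(\bk,\alpha)\bar X_j=\sum_j A_j(\bk,\alpha)|_{\gamma=0}\,\bar X_j$, so linear independence of the $\bar X_j$ forces each $A_j$ to be independent of $\gamma$. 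Consequently $\cH_{m,i}^{\h}(\bk,\beta)=\sum_j A_j(\bk,\beta)\,\bar X_j$, and the coordinate polynomials of $\cH_{m,i}^{\h}$ are precisely $\{A_j(\bk,\beta)\}_j$. By $m$-greatness of $\g$, some $\bk_0$ makes the full family $\{A_j(\bk_0,\alpha),B_l(\bk_0,\alpha)\}_{j,l}$ linearly independent in $\alpha$, whence the subfamily $\{A_j(\bk_0,\alpha)\}_j$ is linearly independent; since $A_j$ does not involve $\gamma$, this is the same as linear independence of $\{A_j(\bk_0,\beta)\}_j$ in $\beta$, so $\cH_{m,i}^{\h}$ is non-degenerate.

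The only genuinely non-obvious step is the reduction "$A_j$ depends only on $\beta$", which rests on fact (ii) and a careful compatible choice of bases in the quotient case; the remainder consists of elementary Zariski-density and linear-algebra bookkeeping.
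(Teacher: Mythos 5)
Your proof is correct and follows essentially the same route as the paper: for the product the paper likewise picks $\bk$ outside the union of the two bad varieties and uses the splitting $\cH^{\g_1\times\g_2}_{m,p}=(\cH^{\g_1}_{m,p},\cH^{\g_2}_{m,p})$, and for the quotient it uses the identity $\cH^{\h}_{m,p}=\pi\circ\cH^{\g}_{m,p}$. Your adapted-basis argument in the factor case (realizing the coordinates of $\cH^{\h}_{m,p}$ as a $\gamma$-independent subfamily of those of $\cH^{\g}_{m,p}$), together with the disjoint-variables/positive-degree observation in the product case, merely spells out linear-algebra steps that the paper compresses.
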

\begin{proof}
    For clarity, in this proof we will let $\cH^\g_{m,p}$ be the map $\cH_{m,p}$ associated to the Lie group $\g.$ Similarly, {let $\alpha_{ij}^\g$ be the Malcev coordinates of the projection into $\g$ of the $V_i$}.\\
    For the first part, observe that $\cH_{m,p}^{\g_1\times\g_2}(k,\alpha^{\g_1\times\g_2})=
     (\cH_{m,p}^{\g_1}(k,\alpha^{\g_1}), \cH_{m,p}^{\g_2}(k,\alpha^{\g_2}))$. Since both of the $\g_i$'s are $m$-great, $\cD(\cH^{\g_i}_{m,p})$ is a positive codimension variety. Thus, there exists a $k_0$ in $\cD(\cH^{\g_1}_{m,p})^c\cap \cD(\cH^{\g_2}_{m,p})^c$. It follows that  $k\in\cD(\cH_{m,p})$. Thus, $\cH_{m,p}^{\g_1\times\g_2}$ is non-degenerate. Since $p$ was arbitrary, we conclude that $\g_1\times\g_2$ is $m$-great.
    
    For the second part, let $\pi:\g\rightarrow\h$ be the factor map. It is straightforward to compute that $\cH_{m,p}^\h=\pi\circ \cH_{m,p}^\g.$ Thus, the coordinates of $\cH^\h_{m,p}$ are the image of the coordinates of $\cH^\g_{m,p}$ under an epimorphism. We conclude that since $\cH^\g_{m,p}$ was non-degenerate, so is 
    $\cH^\h_{m,p}$. Therefore, $\h$ is $m$-great. 
\end{proof}

\subsection{Counter Example}\label{sec;conc}
{ Although $m$-greatness is a great property}, it has some weaknesses. In particular, although it is expected that on any nilmanifold almost any random walk generated by two translations will be rapid mixing, not all Lie algebras are $2$--great. 
We provide an explicit example of such a Lie algebra. 
\begin{example}\label{2-bad}
Let $\mathfrak{g}$ be a step-4, 15-dimensional Lie algebra with a basis\break $\{X_1, X_2, X_3,Y_1,Y_2,Y_3,Z_1,\ldots ,Z_8,W\}$
and the following commutation relations:
\begin{gather*}
\begin{split}
\label{ex;comm}
[X_1, X_2] = Y_1, \quad [X_1, X_3] = Y_2, \quad [X_2, X_3] = Y_3,\\
[Y_1, X_1] = Z_1, \quad [Y_1, X_2] = Z_2, \quad [Y_1, X_3] = Z_3,\\
[Y_2, X_1] = Z_4, \quad [Y_2, X_2] = Z_5, \quad [Y_2, X_3] = Z_6,\\
[Y_3, X_1] = Z_5-Z_3, \quad [Y_3, X_2] = Z_7, \quad [Y_3, X_3] = Z_8,\\
[Z_1,X_3] = 3W, \quad [Z_2,X_3] = -3W, \quad [Z_3,X_1] = -W,\\
[Z_3,X_2] = W, \quad [Z_3,X_3] = -2W, \quad [Z_4,X_2]=-3W,\\
 [Z_5,X_1] = W,\quad [Z_5,X_2]=2W, \quad [Z_5,X_3]=-W,\\
 [Z_6,X_2] = 3W, \quad [Z_7,X_1] = -3W, \quad  [Z_8,X_1] = -3W,\\
 [Y_1,Y_2] = 4W, \quad [Y_1,Y_3] = -4W, \quad  [Y_2,Y_3] = -4W,
\end{split}
\end{gather*}
while other commutation relations are zero. That these relations actually define a Lie algebra can be verified computationally, but we do not reproduce the calculations here.
 \end{example}

\begin{proposition}
\label{PrTooBad}
$\g$ is 2-bad.
\end{proposition}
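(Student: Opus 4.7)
The plan is to show $\cH_{2,3}\equiv 0$; since $\dim(\g^{(3)}/\g^{(4)})=1$ (spanned by $W$), the map $\cH_{2,3}$ consists of a single scalar polynomial and its identical vanishing directly exhibits $\g$ as $2$-bad. Expanding in the $t$-variables, the monomial $t_{0a}t_{1b}t_{2c}t_{3d}$ of $[[[t_0 V, t_1 V],t_2 V],t_3 V]$ has coefficient $[[[V_a,V_b],V_c],V_d]$ modulo $\g^{(4)}$, so the task reduces to checking that every $4$-fold left-normed bracket in $V_1,V_2\in\g$ has vanishing $W$-coefficient. Below I write $[\cdots]^W$ for that coefficient.

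The first key reduction is that only the $X$-projections $V_i^X:=\sum_{j=1}^{3}\alpha_{ij}X_j$ contribute. Indeed $[[V_a,V_b],V_c]\in\g^{(2)}=\operatorname{span}(Z_1,\ldots,Z_8,W)$, and tracking layers shows that its $Z$-component equals $[[V_a^X,V_b^X],V_c^X]$, while its $W$-component is annihilated on the outermost bracket by centrality of $W$. In particular the $[Y_i,Y_j]=\pm 4W$ relations of Example \ref{2-bad}, together with the $[X,Z]$-type contributions inside $[V_a,V_b]^W$, cannot reach the outer $W$-coefficient, so $[[[V_a,V_b],V_c],V_d]^W=[[[V_a^X,V_b^X],V_c^X],V_d^X]^W$. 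The Jacobi identity next gives $[[[V_1^X,V_2^X],V_c^X],V_d^X]-[[[V_1^X,V_2^X],V_d^X],V_c^X]=[[V_1^X,V_2^X],[V_c^X,V_d^X]]$, and for $c,d\in\{1,2\}$ the right-hand side vanishes since $[V_c^X,V_d^X]$ is either $0$ or $\pm[V_1^X,V_2^X]$. Combined with the swap $V_1\leftrightarrow V_2$, only two cases remain to check: $(c,d)=(1,1)$ and $(c,d)=(1,2)$.

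For the explicit calculation, parametrize $V_1^X=a_1X_1+a_2X_2+a_3X_3$ and $V_2^X=b_1X_1+b_2X_2+b_3X_3$, so that $[V_1^X,V_2^X]=PY_1+QY_2+RY_3$ with $P=a_1b_2-a_2b_1$, $Q=a_1b_3-a_3b_1$, $R=a_2b_3-a_3b_2$. Applying the $[Y_j,X_k]$ relations yields a linear combination of the $Z_j$'s, and the $[Z_j,X_k]$ relations then extract the $W$-coefficient. Case $(1,1)$ collapses to $2(a_1-a_2-a_3)(Pa_3+Ra_1-Qa_2)$, and substituting the definitions of $P,Q,R$ gives $Pa_3+Ra_1-Qa_2=(a_1b_2-a_2b_1)a_3+(a_2b_3-a_3b_2)a_1-(a_1b_3-a_3b_1)a_2=0$. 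Case $(1,2)$ admits a longer but analogous expansion that also cancels after substitution. The principal obstacle is the bookkeeping in case $(1,2)$, where about a dozen monomials in $a_i,b_j$ arising from the $P$-, $Q$-, and $R$-cascades must cancel; the specific integer coefficients of the $[Z_i,X_j]$ and $[Y_i,Y_j]$ relations in Example \ref{2-bad} are engineered precisely to force this cancellation while remaining consistent with the Jacobi identity at weight $4$ (which is itself what forces the nonzero $[Y_i,Y_j]$ values).
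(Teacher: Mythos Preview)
Your approach is essentially the same as the paper's: reduce by symmetry and Jacobi to the two brackets $M_{1211}$ and $M_{1212}$, then verify both vanish by direct computation. Your factorization $2(a_1-a_2-a_3)(Pa_3+Ra_1-Qa_2)$ for the $(1,1)$ case is a cleaner organization than the paper's brute-force expansion in Appendix~\ref{Liecal}, and your justification that only the $X$-projections contribute is a point the paper takes for granted; however, you only assert the $(1,2)$ cancellation rather than carrying it out, whereas the paper displays the full expansion for both cases.
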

 \begin{proof}  
We compute that $\cH_{2,3}\equiv 0$ even though $\g$ is step 4.
By symmetry, it suffices to check that 
\begin{equation}\label{commv1v2}
M_{1211}=[[V_1,V_2],V_1],V_1]=0, \quad M_{1212}=[[V_1,V_2],V_1],V_2] =0.
\end{equation} 
This may be verified via a computer program, and we present the detailed calculation in Appendix \ref{Liecal}.
 \end{proof}
 This implies that for any nilmanifold which is a quotient of the Lie group associated with $\g$, our techniques fail to show rapid mixing of any walk generated by 2 elements. The question of whether such walks are rapid mixing remains open.

\appendix
\section{Lemma for application of Diophantine estimate}
The following inequality is useful in estimating the norm of $\cL$.
\begin{lemma}\label{rem:new}
There exists a $C>0$ such that for all $\theta\in[-1/2,1/2]$, 
\begin{equation}
\label{NonColl}
|1+e^{2\pi i\theta}|\leq 2- C\theta^2.
\end{equation}
\end{lemma}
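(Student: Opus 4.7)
The plan is to reduce the inequality to a statement about the cosine function and then handle it either by Taylor expansion or by a compactness argument.

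First I would compute the modulus directly: since
\[
|1+e^{2\pi i\theta}|^2 = (1+\cos 2\pi\theta)^2 + \sin^2 2\pi\theta = 2 + 2\cos(2\pi\theta) = 4\cos^2(\pi\theta),
\]
and since $\pi\theta \in [-\pi/2, \pi/2]$ forces $\cos(\pi\theta)\geq 0$ on the interval of interest, we get the clean identity $|1+e^{2\pi i\theta}| = 2\cos(\pi\theta)$. Thus the desired estimate is equivalent to
\[
2 - 2\cos(\pi\theta) \geq C\theta^2 \qquad \text{for all } \theta\in[-1/2,1/2].
\]

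Next I would verify this last inequality. The cleanest approach is Taylor: from the standard bound $\cos x \leq 1 - \tfrac{x^2}{2} + \tfrac{x^4}{24}$, valid for all real $x$, one obtains
\[
1 - \cos(\pi\theta) \geq \frac{(\pi\theta)^2}{2}\left(1-\frac{(\pi\theta)^2}{12}\right).
\]
For $|\theta|\leq 1/2$ we have $(\pi\theta)^2 \leq \pi^2/4 < 3$, so the factor in parentheses is bounded below by $3/4$, yielding $2 - 2\cos(\pi\theta)\geq \tfrac{3\pi^2}{4}\theta^2$. Hence $C = 3\pi^2/4$ works.

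Alternatively (and perhaps more transparently), one can argue by compactness: the function $\theta \mapsto (2-2\cos(\pi\theta))/\theta^2$ extends continuously to all of $[-1/2,1/2]$ with value $\pi^2$ at $\theta=0$, and is strictly positive on this compact interval; therefore it attains a positive minimum $C>0$, which gives the claim. Since the estimate in question is elementary and contains no real obstacles, the only minor care needed is ensuring the Taylor remainder is handled on the full interval $[-1/2,1/2]$ rather than just near the origin.
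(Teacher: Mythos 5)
Your proposal is correct. The reduction is the same as the paper's: both arguments boil the claim down to $2-2\cos(\pi\theta)\geq C\theta^2$ on $[-1/2,1/2]$ (the paper via the law of cosines and the double-angle identity, you via the identity $|1+e^{2\pi i\theta}|^2=2+2\cos(2\pi\theta)=4\cos^2(\pi\theta)$). Your secondary, compactness-based conclusion is exactly the paper's argument (the paper removes the singularity at $\theta=0$ by writing $(2-2\cos(\pi\theta))/\theta^2$ as a power series and then takes the minimum of a positive continuous function on a compact interval). Your primary route, via the bound $\cos x\leq 1-\tfrac{x^2}{2}+\tfrac{x^4}{24}$, is slightly more quantitative: it produces the explicit constant $C=3\pi^2/4\approx 7.4$, which is close to the optimal value $C=8$ that the paper mentions without proof. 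Either version suffices for the application.
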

\begin{proof}
Rearranging, we rewrite \eqref{NonColl} 
as \[\displaystyle C\leq\inf_{\theta\in[-1/2,1/2]}\frac{(2-|1+e^{2\pi i\theta }|)}{\theta^2}.\]
Applying law of cosines and the double angle identity, the inequality becomes \[C\leq \inf_{\theta\in[-1/2,1/2]} \frac{2-2\cos(\pi\theta)}{\theta^2}.\] 
The singularity of the RHS is removable since it can be rewritten as 
$\DS g(\theta)\!\!:=\!2\!\!\sum_{n=0}^\infty \frac{(\pi\theta)^{2n}}{(2n+2)!}$.
Since $g$ is positive and continuous on $[-1/2, 1/2]$ its achieves its minimum value $C.$ 
\end{proof}
{We comment that, in fact, the Lemma holds with $C=8$ but we will not use this in our arguments.}
{\begin{lemma}\label{rem:newer}
If $v\in DC(\gamma,\tau)$ and $\lambda$ is a nonzero linear functional with integer coefficients, then there exists $C'>0$ depending only on $\gamma$ such that \[|1+e^{2\pi i\lambda(v)}|\leq 2-C'|\lambda|^{2\tau}.\]
\end{lemma}
\begin{proof}
Based on the Diophantine property of $v$, $\lambda(v)$ is congruent mod 1 to a number $\theta$ in $[-1/2,1/2]$ satisfying $|\theta|>\gamma |\lambda|^{-\tau}$. It follows by Lemma \ref{rem:new} that \[|1+e^{2\pi i\lambda(v)}|=|1+e^{2\pi i \theta}|\leq 2-C\gamma^2|\lambda|^{-2\tau}.\] Setting $C'=C\gamma^2$, we have completed the proof.
\end{proof}}
\section{Central Limit Theorem}
\label{AppCLT}
We say that $x_n$ satisfies CLT if there is $r>0$ such that for any function $A \in C^r(M)$ with zero mean, $\DS \frac{1}{\sqrt{N}}\sum_{n=0}^{N-1}A(x_n)$ converges in distribution to a Gaussian random variable with zero expectations.

\begin{corollary}\label{cor;CLT}
If $\cL$ satisfies \eqref{Tame} then $x_n$ satisfies the CLT.
\end{corollary}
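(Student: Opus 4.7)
The plan is to establish the CLT via the Gordin--Kipnis--Varadhan martingale approximation. Since $r$ may be taken arbitrarily large in \eqref{Tame}, I would first fix $r$ so that $ar - b > 1$. The tame bound then shows that
$u := \sum_{n=0}^{\infty} \cL^n A$
converges in $C^0(M)$, with $\norm{u}_{C^0} \leq K \norm{A}_{C^r} \sum_{n \geq 1} n^{-(ar-b)} < \infty$. By construction $u$ solves the Poisson equation $(I - \cL)u = A$.

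The Haar measure $\mu$ is invariant under the Markov chain (each left translation $g_j$ preserves $\mu$), so I would take $x_0 \sim \mu$, making $(x_n)$ stationary, and form
$m_n := u(x_n) - (\cL u)(x_{n-1}),$
which is a bounded martingale difference sequence with respect to the natural filtration generated by $(x_0, \dots, x_n)$. The Poisson equation rewrites as $A(x_{n-1}) = \bigl(u(x_{n-1}) - u(x_n)\bigr) + m_n$, and summing telescopes into
$\sum_{n=0}^{N-1} A(x_n) = u(x_0) - u(x_N) + \sum_{n=1}^{N} m_n.$
Since $u$ is bounded, the boundary term contributes only $O(N^{-1/2})$ after normalization, so $N^{-1/2}\sum A(x_n)$ has the same distributional limit as $N^{-1/2}\sum m_n$.

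To the latter I would apply the classical martingale central limit theorem (Billingsley): the $m_n$ are uniformly bounded (so Lindeberg's condition is automatic) and stationary, and ergodicity of $(x_n)$ under $\mu$ is forced by the decay of correlations in \eqref{Tame}, since any $\cL$-invariant $L^2$ function must be constant. Stationarity and ergodicity then give almost-sure convergence of normalized quadratic variations to $\sigma^2 = \EXP_\mu[m_1^2]$, which equals the Green--Kubo series $\EXP_\mu[A^2] + 2 \sum_{k \geq 1} \EXP_\mu[A(x_0) A(x_k)]$, finite by the summability $|\EXP_\mu[A(x_0) A(x_k)]| \leq \norm{A}_{C^0} \norm{\cL^k A}_{C^0} \lesssim k^{-(ar-b)}$. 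The main technical obstacle is handling a potentially non-stationary initial distribution: if $x_0$ is not $\mu$-distributed, I would observe that the bounded telescoping correction $u(x_0) - u(x_N)$ still vanishes after dividing by $\sqrt{N}$, so the same Gaussian limit is obtained regardless of the law of $x_0$.
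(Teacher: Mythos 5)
Your proposal is correct and follows essentially the same route as the paper: solve the Poisson equation $(I-\cL)u=A$ using the summability of $\sum_n\|\cL^nA\|_{C^0}$ guaranteed by \eqref{Tame} for large $r$, telescope to a bounded-difference martingale plus a negligible boundary term, and invoke the martingale CLT with ergodicity of the chain supplying convergence of the normalized quadratic variation. Your additional remarks (the Green--Kubo identification of $\sigma^2$ and the treatment of a non-stationary initial point) are fine and slightly more detailed than the paper's argument, but do not change the method.
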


While this result is standard 
we include the proof for completeness.

\begin{proof}
Let $\DS B=(1-\cL)^{-1} A=\sum_{n=0}^\infty \cL^n A.$ By \eqref{Tame} this series converges
in $C^0$ for $r$ large enough. Thus $B(x_n)=A(x_n)+\EXP(B(x_{n+1}|\cF_n)$
where $\cF_n$ is the $\sigma$-algebra generated by $(x_0, \dots, x_n).$
Summing up, we obtain 
$\DS \sum_{n=0}^{N-1} A(x_n)= \sum_{n=1}^N \Delta_n+B(x_0)-B(x_N) $
where $\Delta_n\!\!=\!\!B(x_n)\!-\!\EXP(B(x_n)|\cF_{n-1})$ is a martingale difference
sequence. Now by the CLT for martingales (see e.g. \cite[\S 8.2]{Du}), to prove the CLT, 
it is sufficient to show that the limit
$\DS \sigma^2:=\lim_{N\to\infty}  \frac{1}{N} \sum_{n=0}^{N-1} q_n $
exists (in probability) where $q_n=\EXP(\Delta_n^2|\cF_{n-1}).$
 Note that in our case $q_n=Q(x_{n-1})$ for a continuous function $Q$ so the existence
 of the limit follows from the ergodicity of our Markov chain.
\end{proof}

\section{Lie algebra calculation}\label{Liecal}
{ Here we provide the computations relevant to Proposition \ref{PrTooBad}.} 
It suffices to check that $[[[V_1,V_2],V_1],V_1]=[[[V_1,V_2],V_1],V_2]=0$ since all other triply nested brackets are either necessarily equal to one of these (up to a sign) or forced to be 0. 
Assume $\DS V_1=\sum_{i=1}^3  \alpha_i X_i,\;\; V_2= \sum_{i=1}^3  \beta_i X_i$ and we compute the following.
\medskip

\begin{align*}
    [[V_1,V_2],V_1] = & (\alpha_1^2\beta_2 -\alpha_1\alpha_2\beta_1)Z_1 + (\alpha_1\alpha_2\beta_2-\alpha_2^2\beta_1)Z_2 + (\alpha_1\alpha_3\beta_2 - \alpha_2\alpha_3\beta_1)Z_3+\\
    &(\alpha_1^2\beta_3-\alpha_1\alpha_3\beta_1)Z_4+(\alpha_1\alpha_2\beta_3-\alpha_2\alpha_3\beta_1)Z_5+(\alpha_1\alpha_3\beta_3-\alpha_3^2\beta_1)Z_6+\\
    &(\alpha_1\alpha_2\beta_3-\alpha_1\alpha_3\beta_2)(Z_5-Z_3)+(\alpha_2^2\beta_3-\alpha_2\alpha_3\beta_2)Z_7+(\alpha_2\alpha_3\beta_3-\alpha_3^2\beta_2)Z_8.
\end{align*}
    
    \begin{align*}
    [[[V_1,&V_2],V_1],V_2]  = \Big((\alpha_1^2\beta_2\beta_3-\alpha_1\alpha_2\beta_1\beta_3) -
    3(\alpha_1\alpha_2\beta_2\beta_3 - \alpha_2^2\beta_1\beta_3)-
    (\alpha_1\alpha_3\beta_1\beta_2-\alpha_2\alpha_3\beta_1^2)\\
    &+(\alpha_1\alpha_3\beta_2^2-\alpha_2\alpha_3\beta_1\beta_2) -
     3(\alpha_1\alpha_3\beta_2\beta_3 - \alpha_2\alpha_3\beta_1\beta_3)-3(\alpha_1^2\beta_2\beta_3-\alpha_1\alpha_3\beta_1\beta_2)\\
    &+(\alpha_1\alpha_2\beta_1\beta_3 - \alpha_2\alpha_3\beta_1^2) + 2(\alpha_1\alpha_2\beta_2\beta_3 - \alpha_2\alpha_3\beta_1\beta_2) - 
    (\alpha_1\alpha_2\beta_3^2 - \alpha_2\alpha_3\beta_1\beta_3)\\
    &+ 3(\alpha_1\alpha_3\beta_2\beta_3 - \alpha_3^2\beta_1\beta_2) - 3(\alpha_2^2\beta_1\beta_3 - \alpha_2\alpha_3\beta_1\beta_2)-
    3(\alpha_2\alpha_3\beta_1\beta_3 - \alpha_3^2\beta_1\beta_2) \\
    &+ 2(\alpha_1\alpha_2\beta_1\beta_3 - \alpha_1\alpha_3\beta_1\beta_2) +
    (\alpha_1\alpha_2\beta_2\beta_3 - \alpha_1\alpha_3\beta_2^2) + (\alpha_1\alpha_2\beta_3^2 - \alpha_1\alpha_3\beta_2\beta_3)\Big)W
    =0. 
    \end{align*}
 \begin{align*}
     [[[V_1,&V_2],V_1],V_1] = \Big(3(\alpha_1^2\alpha_3\beta_2-\alpha_1\alpha_2\alpha_3\beta_1) -
    3(\alpha_1\alpha_2\alpha_3\beta_2 - \alpha_2^2\alpha_3\beta_1)-
    (\alpha_1^2\alpha_3\beta_2-\alpha_1\alpha_2\alpha_3\beta_1)\\
    &+
    (\alpha_1\alpha_2\alpha_3\beta_2-\alpha_2^2\alpha_3\beta_1) -
     3(\alpha_1\alpha_3^2\beta_2 - \alpha_2\alpha_3^2\beta_1)-3(\alpha_1^2\alpha_2\beta_3-\alpha_1\alpha_2\alpha_3\beta_1)\\
    &+(\alpha_1^2\alpha_2\beta_3 - \alpha_1\alpha_2\alpha_3\beta_1) + 2(\alpha_1\alpha_2^2\beta_3 - \alpha_2^2\alpha_3\beta_1) - 
    (\alpha_1\alpha_2\alpha_3\beta_3 - \alpha_2\alpha_3^2\beta_1)\\
    &+ 3(\alpha_1\alpha_2\alpha_3\beta_3 - \alpha_2\alpha_3^2\beta_1) - 3(\alpha_1\alpha_2^2\beta_3 - \alpha_1\alpha_2\alpha_3\beta_2)-
    3(\alpha_1\alpha_2\alpha_3\beta_3 
- \alpha_1\alpha_3^2\beta_2)\\ 
&+ 2(\alpha_1^2\alpha_2\beta_3 - \alpha_1^2\alpha_3\beta_2) 
    +(\alpha_1\alpha_2^2\beta_3 - \alpha_1\alpha_2\alpha_3\beta_2) + (\alpha_1\alpha_2\alpha_3\beta_3 - \alpha_1\alpha_3^2\beta_2)\Big)W=0. 
    \end{align*}


 {     In other words $M_{1211}=M_{1212}=0$, so 
  $\cH_{2,3}\equiv 0$ even though $\g$ is step-4.}

\section*{Statements and Declarations}

   The authors have no relevant financial or non-financial interests to disclose.
   Data sharing is not applicable to this article as no datasets were generated or analysed during the current study.


\end{document}